\providecommand{\algorithmname}{Algorithm}
\numberwithin{equation}{section}
\numberwithin{figure}{section}
\theoremstyle{plain}
\newtheorem{thm}{\protect\theoremname}[section]
  \theoremstyle{plain}
  \newtheorem{lyxalgorithm}[thm]{\protect\algorithmname}
  \theoremstyle{plain}
  \newtheorem{prop}[thm]{\protect\propositionname}
  \theoremstyle{plain}
  \newtheorem{assumption}[thm]{\protect\assumptionname}
  \theoremstyle{remark}
  \newtheorem{claim}[thm]{\protect\claimname}
  \theoremstyle{remark}
  \newtheorem{rem}[thm]{\protect\remarkname}
  \providecommand{\algorithmname}{Algorithm}
  \providecommand{\assumptionname}{Assumption}
  \providecommand{\claimname}{Claim}
  \providecommand{\propositionname}{Proposition}
  \providecommand{\remarkname}{Remark}
\providecommand{\theoremname}{Theorem}
\begin{document}
\title[Parallel Dykstra splitting]{A general framework for parallelizing Dyskstra splitting} 

\subjclass[2010]{41A50, 90C25, 68Q25, 47J25}
\begin{abstract}
We show a general framework of parallelizing Dykstra splitting that
includes the classical Dykstra's algorithm and the product space formulation
as special cases, and prove their convergence. The key idea is to
split up the function whose conjugate takes in the sum of all dual
variables in the dual formulation.
\end{abstract}

\author{C.H. Jeffrey Pang}

\thanks{We acknowledge grant R-146-000-214-112 from the Faculty of Science,
National University of Singapore. }

\curraddr{Department of Mathematics\\ 
National University of Singapore\\ 
Block S17 08-11\\ 
10 Lower Kent Ridge Road\\ 
Singapore 119076 }

\email{matpchj@nus.edu.sg}

\date{\today{}}

\keywords{Dykstra's splitting, proximal point algorithm, block coordinate minimization}

\maketitle
\tableofcontents{}

\section{Introduction}

Let $X$ be a finite dimensional Hilbert space. Consider the problem
\begin{equation}
(P1)\qquad\min_{x\in X}\sum_{i=1}^{r}\left(h_{i}(x)\right)+g(x),\label{eq:first-primal}
\end{equation}
where $h_{i}:X\to\mathbb{R}$ and $g:X\to\mathbb{R}$ are proper closed
convex functions. The (Fenchel) dual of $(P1)$ is 
\begin{equation}
(D1)\qquad\max_{z\in X^{r}}-\sum_{i=1}^{r}\left(h_{i}^{*}(z_{i})\right)-g^{*}\left(-\sum_{i=1}^{r}z_{i}\right).\label{eq:dual-1}
\end{equation}
A particular case of $(P1)$ that is well studied is when $g(x):=\frac{1}{2}\|x-x_{0}\|^{2}$
for some $x_{0}\in X$, and 
\[
g^{*}(y)=\frac{1}{2}\|y\|^{2}+x_{0}^{T}y=\frac{1}{2}\|y+x_{0}\|^{2}-\frac{1}{2}\|x_{0}\|^{2}.
\]
The resulting $(D1)$ would be the sum of a block separable concave
function and a smooth concave function. For the problem $(D1)$, if
the map $z\mapsto g^{*}(-\sum_{i=1}^{r}z_{i})$ is smooth, then $(D1)$
can be solved by block coordinate minimization (BCM); Specifically,
one maximizes a particular $z_{i}$, say $z_{i^{*}}$, while keeping
all other $z_{j}$, where $j\neq i^{*}$, fixed, and the index $i^{*}$
is cycled over all indices in $\{1,\dots,r\}$. (It would be a minimization
if $(D1)$ were written in a minimization form.)

Dykstra's algorithm was proposed in \cite{Dykstra83}, and it was
separately recognized to be the BCM on $(D1)$ with $h_{i}(\cdot)\equiv\delta_{C_{i}}(\cdot)$
for all $i\in\{1,\dots,r\}$ and $g(x):=\frac{1}{2}\|x-x_{0}\|^{2}$
in \cite{Han88,Gaffke_Mathar}. An advantage of Dykstra's algorithm
is that it decomposes the complicated problem $(P1)$ so that the
proximal operation is applied to only one function of the form $h_{i}(\cdot)$
(or $h_{i}^{*}(\cdot)$) at a time so that one can solve the larger
problem in hand. Dykstra's algorithm was shown to converge to the
primal minimizer in \cite{BD86}, even when a dual minimizer does
not exist. For more information Dykstra's algorithm, we refer to \cite{Deutsch01_survey,Deustch01,BauschkeCombettes11,EsRa11}.

The extension of considering general $h_{i}(\cdot)$ was done in \cite{Han89}
and \cite{Tseng-93-Dual-ascent}. We now refer to this as Dykstra's
splitting. It is quite easy to see that the convergence to the dual
objective value implies the convergence to the primal minimizer. (See
for example, the end of the proof in Theorem \ref{thm:convergence}.)
In \cite{Han89} and \cite{Tseng-93-Dual-ascent}, they proved that
Dykstra's splitting converges, but under a constraint qualification.
The paper \cite{Bauschke-Combettes-Dykstra-split-2008} proved that
Dykstra's splitting converges without constraint qualifications, but
only for the case where $r=2$. In \cite{Pang_Dyk_spl}, we proved
the convergence of Dykstra's splitting in finite dimensions for any
$r\geq2$.

The BCM algorithm is related to block coordinate gradient descent,
but we shall only mention them in passing as we do not deal directly
with these algorithms in this paper. Much research on the BCM and
related algorithms is on nonasymptotic convergence rates when a minimizer
to $(D1)$ exists and the level sets are bounded.

A shortcoming of Dykstra's splitting is that it requires that the
proximal operations on $h_{i}(\cdot)$ be taken one at a time in order.
If data on the functions $h_{i}(\cdot)$ were distributed on different
agents, then these agents would be idling as they wait for their turn.
Another parallel method for solving $(P1)$ when $r>2$ is to use
the product space formulation largely due to \cite{Pierra84}. (See
the first paragraph in Section \ref{sec:pdt-space-form} for more
details.) A shortcoming of this product space formulation is that
a central controller needs to compute the average of all intermediate
primal variables before the next iteration can proceed. This can be
a tedious task depending on the communication model, and if it were
easy to do, then accelerated proximal algorithms might be preferred
(see for example \cite{BeckTeboulle2009,Tseng_APG_2008}, who built
on the work of \cite{Nesterov_1983}). Parallelizations of the BCM
were suggested in \cite{Calafiore_2016_parallel,Richtarik_Takac_parallel},
but we note that their approach is different from what we will discuss
in this paper, and the approach in \cite{Richtarik_Takac_parallel}
requires random sampling.

Dykstra's splitting falls under the larger class of proximal methods.
See a survey in \cite{Combettes_Pesquet}.

\subsection{Contributions of this paper}

The contribution of this paper is to parallelize the BCM problem arising
from Dykstra's algorithm so that agents that otherwise would have
been idle in Dykstra's algorithm can be actively decreasing the dual
objective value. This is achieved by breaking up the $g(\cdot)$ in
$(P1)$ so that smaller versions of the problem of the form $(D1)$
can be solved in parallel with few communication requirements between
the agents involved. We also show that our algorithm generalizes the
product space formulation. We also prove its convergence to the primal
minimizer in the spirit of \cite{BD86,Gaffke_Mathar}, even when a
dual minimizer may not exist.

\section{Algorithm description}

Consider the problem $(P1)$. Let $\{\lambda_{i}\}_{i=0}^{m}$ be
constants such that $\sum_{i=0}^{m}\lambda_{i}=1$ and $\lambda_{i}\geq0$.
Define $h_{j}:X\to\mathbb{R}$ to be 
\begin{equation}
\begin{array}{c}
h_{j}(x)=\lambda_{j-r}g(x)\mbox{ for all }j\in\{r+1,\dots,r+m\}.\end{array}\label{eq:def-f-j}
\end{equation}
Then \eqref{eq:first-primal} can be rewritten as 
\begin{equation}
(P2)\qquad\min_{x}\,\,\sum_{i=1}^{r+m}h_{i}(x)+\lambda_{0}g(x),\label{eq:second-primal}
\end{equation}
which in turn has dual 
\begin{equation}
(D2)\qquad\max_{z\in X^{r+m}}\,\,F(z):=-\sum_{i=1}^{r+m}h_{i}^{*}(z_{i})-\lambda_{0}g^{*}\left(-\frac{1}{\lambda_{0}}\sum_{i=1}^{r+m}z_{i}\right),\label{eq:dual}
\end{equation}
where 
\begin{equation}
h_{i}^{*}(z_{i})\overset{\eqref{eq:def-f-j}}{=}\lambda_{i-r}g^{*}\left(\frac{z_{i}}{\lambda_{i-r}}\right)\mbox{ for all }i\in\{r+1,\dots,r+m\}.\label{eq:f-j-star}
\end{equation}
Algorithm \ref{alg:Dist-Dyk} describes a method to solve $(D2)$
(and equivalently, $(P1)$ and $(P2)$). 

\begin{algorithm}[th]
\begin{lyxalgorithm}
\label{alg:Dist-Dyk}(A general framework for Dykstra's splitting)
This algorithm solves \eqref{eq:dual}, which can in turn find the
primal minimizer of \eqref{eq:first-primal}. Let $\bar{w}$ be a
fixed number.

01$\quad$Start with dual variables $\{z_{i}^{1,0}\}_{i=1}^{r+m}$
to the problem \eqref{eq:dual}.

02$\quad$For $n=1,\dots$

03$\quad$$\quad$For $w=1,\dots,\bar{w}$

04$\quad$$\quad$$\quad$Choose a (possibly empty) subset $S_{n,w}\subset\{1,\dots,r+m\}$.

05$\quad$$\quad$$\quad$Let $\{z_{i}^{n,w}\}_{i\in S_{n,w}}$ be
a minimizer of 

\begin{equation}
\begin{array}{c}
\underset{z_{i}:i\in S_{n,w}}{\min}\,\,\underset{i\in S_{n,w}}{\sum}h_{i}^{*}(z_{i})+\lambda_{0}g^{*}\bigg(-\frac{1}{\lambda_{0}}\bigg[\underset{i\in S_{n,w}}{\sum}z_{i}+\underset{i\notin S_{n,w}}{\sum}z_{i}^{n,w-1}\bigg]\bigg)\end{array}\label{eq:outer-opt}
\end{equation}

06$\quad$$\quad$$\quad$Let $\{z_{i}^{n,w}\}_{i\notin S_{n,w}}$
be such that\begin{subequations}\label{eq_m:inner-opt} 
\begin{eqnarray}
\underset{i\notin S_{n,w}}{\sum}h_{i}^{*}(z_{i}^{n,w}) & \leq & \underset{i\notin S_{n,w}}{\sum}h_{i}^{*}(z_{i}^{n,w-1})\label{eq:inner-opt-1}\\
\mbox{ and }\underset{i\notin S_{n,w}}{\sum}z_{i}^{n,w} & = & \underset{i\notin S_{n,w}}{\sum}z_{i}^{n,w-1}.\label{eq:inner-opt-2}
\end{eqnarray}

\end{subequations}07$\quad$$\quad$End for

08$\quad$$\quad$Let $z^{n+1,0}=z^{n,\bar{w}}$.

09$\quad$End for \end{lyxalgorithm}
\end{algorithm}

Dykstra's algorithm is usually expressed in terms of the primal variables.
We refer to Proposition \ref{prop:primal-form}. 

We note that in Algorithm \ref{alg:Dist-Dyk}, both problems \eqref{eq:outer-opt}
and \eqref{eq_m:inner-opt} are aimed to increase the dual objective
value in \eqref{eq:dual}.

We show a direct connection between $(D1)$ and $(D2)$, whose proof
is just elementary convexity.
\begin{prop}
\label{prop:direct-D1-D2}(Direct connection between $(D1)$ and $(D2)$)
Consider the problem 
\begin{equation}
\min_{z_{i}:r+1\leq i\leq r+m}\lambda_{0}g^{*}\left(-\frac{1}{\lambda_{0}}\left(\sum_{i=1}^{r}\bar{z}_{i}+\sum_{i=r+1}^{r+m}z_{i}\right)\right)+\sum_{i=r+1}^{r+m}\underbrace{\lambda_{i-r}g^{*}\left(\frac{1}{\lambda_{i-r}}z_{i}\right)}_{\overset{\eqref{eq:f-j-star}}{=}h_{i}^{*}(z_{i})}.\label{eq:agg}
\end{equation}
Suppose further that $g^{*}(\cdot)$ is strictly convex. Then \eqref{eq:agg}
has a minimum value of $g^{*}\left(-\sum_{i=1}^{r}\bar{z}_{i}\right)$
with minimizer 
\begin{equation}
z_{j}=-\lambda_{j-r}\sum_{i=1}^{r}\bar{z}_{i}\mbox{ for all }j\in\{r+1,\dots,r+m\}.\label{eq:z-j-conclusion}
\end{equation}
This establishes the equivalence of $(D1)$ and $(D2)$ directly
without appealing to the primal problems $(P1)$ and $(P2)$.
\end{prop}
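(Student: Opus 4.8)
The plan is to prove this by a direct optimization argument exploiting the strict convexity hypothesis. I want to show that the objective in \eqref{eq:agg}, viewed as a function of the free variables $z_{r+1},\dots,z_{r+m}$, attains the claimed minimum value at the claimed minimizer. Let me set $s := -\sum_{i=1}^{r}\bar{z}_i$ to lighten notation, so the first term is $\lambda_0 g^*\bigl(\frac{1}{\lambda_0}(s - \sum_{i=r+1}^{r+m} z_i)\bigr)$ and the remaining terms are $\sum_{i=r+1}^{r+m}\lambda_{i-r} g^*\bigl(\frac{1}{\lambda_{i-r}} z_i\bigr)$.

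First I would verify that the proposed point \eqref{eq:z-j-conclusion}, namely $z_j = \lambda_{j-r} s$, gives the claimed value: substituting, each inner argument $\frac{1}{\lambda_{j-r}} z_j$ equals $s$, and the outer argument becomes $\frac{1}{\lambda_0}(s - \sum_{i=r+1}^{r+m}\lambda_{i-r} s) = \frac{1}{\lambda_0}(s(1 - \sum_{i=1}^{m}\lambda_i)) = \frac{1}{\lambda_0}(s\lambda_0) = s$, using $\sum_{i=0}^{m}\lambda_i = 1$. Hence the whole objective evaluates to $\lambda_0 g^*(s) + \sum_{i=r+1}^{r+m}\lambda_{i-r} g^*(s) = g^*(s)\sum_{i=0}^{m}\lambda_i = g^*(s)$, which is exactly $g^*(-\sum_{i=1}^{r}\bar{z}_i)$ as claimed. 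This confirms the value is attained; it remains to show it is optimal and that the minimizer is unique (so the strict convexity hypothesis is genuinely used).

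The core of the argument is a weighted Jensen inequality. The objective is a $\lambda$-weighted sum of evaluations of the convex function $g^*$ at the points $\frac{1}{\lambda_0}(s - \sum z_i)$ and the $\frac{1}{\lambda_{i-r}} z_i$, with weights $\lambda_0, \lambda_1, \dots, \lambda_m$ that sum to $1$. By convexity of $g^*$, this weighted sum is bounded below by $g^*$ evaluated at the corresponding weighted average of the arguments; I would compute that weighted average to be $\lambda_0 \cdot \frac{1}{\lambda_0}(s - \sum z_i) + \sum_{i} \lambda_{i-r} \cdot \frac{1}{\lambda_{i-r}} z_i = (s - \sum z_i) + \sum z_i = s$. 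Thus for every feasible choice the objective is at least $g^*(s)$, establishing optimality of the value. (One should handle the degenerate case where some $\lambda_i = 0$ separately, or simply note that the corresponding block drops out of the sum; I would remark that the relevant convex combination still goes through with the convention that terms with zero weight are omitted.)

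The one step requiring genuine care — and the only place strict convexity enters — is the \emph{uniqueness} of the minimizer, i.e.\ that \eqref{eq:z-j-conclusion} is the \emph{only} minimizer. Strict convexity of $g^*$ forces equality in Jensen precisely when all the arguments coincide, so at any minimizer we must have $\frac{1}{\lambda_{j-r}} z_j = s$ for every $j$ (with positive weight) and also $\frac{1}{\lambda_0}(s - \sum z_i) = s$; the first family of equations already yields $z_j = \lambda_{j-r} s$, matching \eqref{eq:z-j-conclusion}. I expect the main obstacle to be bookkeeping rather than conceptual: carefully tracking the weights $\lambda_{i-r}$, confirming the convex-combination coefficients sum to one via $\sum_{i=0}^m \lambda_i = 1$, and cleanly dispatching the zero-weight blocks so that the strict-convexity equality condition is applied only to the active terms. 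Once the Jensen setup is written with the correct weights, both the lower bound and the uniqueness follow immediately, and the stated equivalence of $(D1)$ and $(D2)$ is then just the observation that minimizing out $z_{r+1},\dots,z_{r+m}$ in $(D2)$ reproduces the term $g^*(-\sum_{i=1}^r z_i)$ appearing in $(D1)$.
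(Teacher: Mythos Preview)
Your proposal is correct and is precisely the ``elementary convexity'' argument the paper alludes to; the paper in fact omits any detailed proof, stating only that the proof ``is just elementary convexity.'' Your Jensen-inequality computation with weights $\lambda_0,\lambda_1,\dots,\lambda_m$ summing to one is exactly what is intended, and your handling of the equality case via strict convexity and of the degenerate $\lambda_{i-r}=0$ blocks is appropriate.
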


\subsection{\label{sub:general-strategies}Finding $\{z_{i}^{n,w}\}_{i\protect\notin S_{n,w}}$
in \eqref{eq_m:inner-opt}}

We now suggest methods for finding $\{z_{i}^{n,w}\}_{i\notin S_{n,w}}$
satisfying \eqref{eq_m:inner-opt}. Let $S'$ be a subset of $\{1,\dots,r+m\}\backslash S_{n,w}$,
and suppose $j\in\{r+1,\dots,r+m\}\backslash S'$. Consider the problem
\begin{eqnarray}
 & \underset{z_{i}:i\in S'\cup\{j\}}{\min} & \sum_{i\in S'\cup\{j\}}h_{i}^{*}(z_{i})\label{eq:typical-ADMM}\\
 & \mbox{s.t. } & \sum_{i\in S'\cup\{j\}}z_{i}=\sum_{i\in S'\cup\{j\}}z_{i}^{n,w-1}.\nonumber 
\end{eqnarray}
Such a problem can be solved by other methods like the ADMM. But intermediate
iterates of the other methods may not satisfy the equality constraint
of \eqref{eq:typical-ADMM}, so one has to check whether these intermediate
iterates are indeed more useful than what we had started off with.
We now show an option that is close to the spirit of BCM. 
\begin{prop}
\label{prop:reduced-pblm}(Reduced unconstrained problem) Let $j\in\{r+1,\dots,r+m\}$,
and suppose $j\notin S'$. Recall that $h_{j}(x)\overset{\eqref{eq:def-f-j}}{=}\lambda_{j-r}g(x)$.
For the problem \eqref{eq:typical-ADMM}, $\{z_{i}\}_{i\in S'\cup\{j\}}$
is a minimizer to \eqref{eq:typical-ADMM} if and only if $\{z_{i}\}_{i\in S'}$
is a minimizer to 
\begin{equation}
\begin{array}{c}
\underset{z_{i}:i\in S'}{\min}\,\,\underset{i\in S'}{\sum}h_{i}^{*}(z_{i})+\lambda_{j-r}g^{*}\bigg(\frac{1}{\lambda_{j-r}}\bigg[\underset{i\in S'\cup\{j\}}{\sum}z_{i}^{n,w-1}-\underset{i\in S'}{\sum}z_{i}\bigg]\bigg).\end{array}\label{eq:transformed-dual}
\end{equation}
\end{prop}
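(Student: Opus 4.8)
The plan is to eliminate the single equality constraint of \eqref{eq:typical-ADMM} by using it to solve for $z_j$ in terms of the remaining variables, and then to verify that the reduced objective is exactly the one appearing in \eqref{eq:transformed-dual}. First I would observe that the right-hand side of the constraint, $\sum_{i\in S'\cup\{j\}}z_i^{n,w-1}$, is a fixed constant, since the iterates $z_i^{n,w-1}$ are given data; call it $c$. The constraint then reads $z_j=c-\sum_{i\in S'}z_i$, which determines $z_j$ uniquely once $\{z_i\}_{i\in S'}$ are chosen, and conversely any choice of $\{z_i\}_{i\in S'}$ together with this value of $z_j$ produces a feasible point of \eqref{eq:typical-ADMM}. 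Thus the assignment $\{z_i\}_{i\in S'}\mapsto(\{z_i\}_{i\in S'},z_j)$ is a bijection between the (unconstrained) decision variables of \eqref{eq:transformed-dual} and the feasible set of \eqref{eq:typical-ADMM}.

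Next I would substitute this expression for $z_j$ into the objective of \eqref{eq:typical-ADMM}, splitting off the $j$-th term as $\sum_{i\in S'\cup\{j\}}h_i^*(z_i)=\sum_{i\in S'}h_i^*(z_i)+h_j^*(z_j)$. Since $j\in\{r+1,\dots,r+m\}$, relation \eqref{eq:f-j-star} gives $h_j^*(z_j)=\lambda_{j-r}g^*(z_j/\lambda_{j-r})$, and inserting $z_j=c-\sum_{i\in S'}z_i$ turns the last term into exactly $\lambda_{j-r}g^*\bigl(\frac{1}{\lambda_{j-r}}\bigl[\sum_{i\in S'\cup\{j\}}z_i^{n,w-1}-\sum_{i\in S'}z_i\bigr]\bigr)$. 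Hence the objective of \eqref{eq:typical-ADMM}, restricted to its feasible set and transported along the bijection, coincides pointwise (as an extended-real-valued function, so no assumption of finiteness on $g^*$ or the $h_i^*$ is needed) with the objective of \eqref{eq:transformed-dual}.

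Finally, because the two objectives agree at corresponding points and the bijection matches feasible tuples of \eqref{eq:typical-ADMM} with arbitrary values of $\{z_i\}_{i\in S'}$, a tuple minimizes one problem precisely when its image minimizes the other, which yields the claimed ``if and only if'' in both directions. I do not expect any genuine obstacle here, as the entire content is the elimination of the equality constraint; the one point worth stating carefully is that the correspondence is truly a bijection, so that no minimizers are lost or spuriously created in passing between the constrained and the reduced unconstrained formulation.
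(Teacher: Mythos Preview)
Your proposal is correct and follows exactly the same approach as the paper: the paper's proof is the single sentence ``The linear constraint in \eqref{eq:typical-ADMM} can be removed by expressing $z_{j}$ in terms of the other variables,'' which is precisely the elimination argument you carry out in detail. There is nothing to add.
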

\begin{proof}
The linear constraint in \eqref{eq:typical-ADMM} can be removed by
expressing $z_{j}$ in terms of the other variables.
\end{proof}
Similar to the relationship between $(P1)$ and $(D1)$, \eqref{eq:transformed-dual}
has dual 
\begin{equation}
\begin{array}{c}
\underset{x}{\min}\,\,\underset{i\in S'}{\sum}h_{i}(x)+\lambda_{j-r}g(x)-\bigg(\underset{i\in S'\cup\{r+1\}}{\sum}z_{i}^{n,w-1}\bigg)^{T}x\end{array}\label{eq:back-primal}
\end{equation}
This problem \eqref{eq:transformed-dual} can then be solved by BCM.
Given that $|S'|$ is now smaller compared to $|\{1,\dots,r+m\}|$,
other methods might be better. For example, an accelerated proximal
gradient method can be used if the problem size is small enough that
communication problems are less significant. Interior point methods
can be considered if the subproblem size is small enough. The set
$\{1,\dots,r+m\}\backslash S_{n,w}$ can be partitioned into at most
$m$ such subsets, with each subset taking a term of the form $\lambda_{j-r}g(x)$
so that these problems can be solved in parallel.

\section{\label{sec:pdt-space-form}Product space formulation is a subcase}

Consider the problem of projecting $x_{0}$ onto $\cap_{i=1}^{r}C_{i}$.
This problem can be equivalently formulated using the product space
formulation largely attributed to \cite{Pierra84} and also studied
in \cite{Iusem_DePierro_Han_1991}; Specifically, the projection of
$(x_{0},\dots,x_{0})\in X^{r}$ onto $\mathcal{D}\cap\mathcal{C}$,
where $\mathcal{D}\subset X^{r}$ is the set $\{(x,\dots,x):x\in X\}$
and $\mathcal{C}$ is the set $C_{1}\times\cdots\times C_{r}$, gives
$P_{\cap_{i=1}^{r}C_{i}}(x_{0})$ in each component. Dykstra's algorithm
can then be applied on this formulation, which can then be rewritten
as Algorithm \ref{alg:pdt-space-alg} below. In this section, we show
that Algorithm \ref{alg:pdt-space-alg} is a special case of Algorithm
\ref{alg:Dist-Dyk}.

\begin{algorithm}[h]
\begin{lyxalgorithm}
\label{alg:pdt-space-alg}(Product space formulation for Dykstra's
algorithm) For the problem of projecting $x_{0}$ onto $\cap_{i=1}^{r}C_{i}$,
the product space formulation gives the following algorithm. (Compare
this to \cite[page 267]{Iusem_DePierro_Han_1991})

01 $\quad$Start with dual variables $\{z_{i}^{1}\}_{i=1}^{r}$, and
let $x^{1}=x_{0}-\frac{1}{r}\sum_{i=1}^{r}z_{i}^{1}$.

02 $\quad$For $n=2,\dots$

03 $\quad$$\quad$For $i=1,\dots,r$

04 $\quad$$\quad$$\quad$$u_{i}^{n}=x^{n-1}+z_{i}^{n-1}$

05 $\quad$$\quad$$\quad$$x_{i}^{n}=P_{C_{i}}(u_{i}^{n})$

06 $\quad$$\quad$$\quad$$z_{i}^{n}=u_{i}^{n}-x_{i}^{n}$

07 $\quad$$\quad$End for

08 $\quad$$\quad$$x^{n}=\frac{1}{r}\sum_{i=1}^{r}x_{i}^{n}$.

09 $\quad$End for\end{lyxalgorithm}
\end{algorithm}

Algorithm \ref{alg:calc-inner-opt} is a particular way to solve \eqref{eq_m:inner-opt}
in Algorithm \ref{alg:Dist-Dyk}, which will be used throughout this
section and the next section.

\begin{algorithm}[h]
\begin{lyxalgorithm}
\label{alg:calc-inner-opt}(Calculating \eqref{eq_m:inner-opt}) Suppose
that \eqref{eq_m:inner-opt} in Algorithm \ref{alg:Dist-Dyk} is performed
using the following step:

01$\quad$Find disjoint subsets $\{S_{n,w,j}'\}_{j=r+1}^{r+m}$ such
that 
\begin{eqnarray*}
 &  & S_{n,w}\cap\bigg[\bigcup_{j=r+1}^{r+m}S_{n,w,j}'\bigg]=\emptyset,\\
 & \mbox{and } & \mbox{For all }j\in\{r+1,\dots,r+m\}\mbox{ }\begin{cases}
S_{n,w,j}'\subset\{1,\dots,r\}\cup\{j\},\mbox{ and }\\
S_{n,w,j}'\neq\emptyset\mbox{ implies }S_{n,w,j}'\supsetneq\{j\}.
\end{cases}
\end{eqnarray*}

02$\quad$For $j=r+1,\dots,r+m$

03$\quad$$\quad$Define $\{z_{i}^{n,w}\}_{i\in S_{n,w,j}'}$ by\begin{subequations}\label{eq_m:solve-inner-j}
\begin{eqnarray}
\{z_{i}^{n,w}\}_{i\in S_{n,w,j}'}\in & \underset{z_{i}:i\in S_{n,w,j}'}{\arg\min} & \sum_{i\in S_{n,w,j}'}h_{i}(z_{i})\label{eq:solve-inner-j-1}\\
 & \mbox{s.t. } & \sum_{i\in S_{n,w,j}'}z_{i}=\sum_{i\in S_{n,w,j}'}z_{i}^{n,w-1}.\label{eq:solve-inner-j-2}
\end{eqnarray}

\end{subequations}04$\quad$End for 

05$\quad$If $i\notin S_{n,w}\cup\bigcup_{j=r+1}^{r+m}S_{n,w,j}'$,
then $z_{i}^{n,w}=z_{i}^{n,w-1}$. \end{lyxalgorithm}
\end{algorithm}
We now present our result showing that Algorithm \ref{alg:pdt-space-alg}
is a particular case of Algorithm \ref{alg:Dist-Dyk}.
\begin{prop}
(Product space formulation is a particular case) Suppose Algorithm
\ref{alg:Dist-Dyk} is run using Algorithm \ref{alg:calc-inner-opt}
for the subproblem \eqref{eq_m:inner-opt}. Suppose the parameters
are set to be $\bar{w}=2$, $m=r-1$, $g(\cdot)=\frac{1}{2}\|\cdot-x_{0}\|^{2}$
and 
\begin{enumerate}
\item [(i)]$h_{i}(\cdot)=\delta_{C_{i}}(\cdot)$ and for all $i\in\{1,\dots,r\}$,
\item [(ii)]$\lambda_{i}=\frac{1}{r}$ for all $i\in\{0,\dots,r-1\}$,
\item [(iii)]$S_{n,1}=\{r+1,\dots,2r-1\}$ and $S_{n,2}=\{r\}$ for all
$n\geq0$, and 
\item [(iv)]$S_{n,1,j}'=\emptyset$ and $S_{n,2,j}'=\{j-r,j\}$ for all
$n\geq0$ and $j\in\{r+1,\dots,2r-1\}$.
\end{enumerate}
If $z_{i}^{1}$ of Algorithm \ref{alg:pdt-space-alg} equals to $z_{i}^{1,0}$
of Algorithm \ref{alg:Dist-Dyk} for all $i\in\{1,\dots,r\}$, then
$z_{i}^{n}=z_{i}^{n,0}$ for all $n\geq0$ and $i\in\{1,\dots,r\}$. \end{prop}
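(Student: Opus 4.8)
The plan is to argue by induction on $n$, showing that the two algorithms stay synchronised after each completed outer iteration, maintaining the hypothesis $z_{i}^{n}=z_{i}^{n,0}$ for $i\in\{1,\dots,r\}$. Before starting I would record two structural facts. First, for the product-space iteration one checks by a one-line induction (using step 08 and the initialisation in step 01) that the invariant $x^{n}=x_{0}-\frac{1}{r}\sum_{i=1}^{r}z_{i}^{n}$ holds for every $n$; this lets me eliminate the averaged variable $x^{n}$ and phrase everything in terms of the $z_{i}^{n}$. Second, I would observe that the only information propagated between outer iterations is the aggregate $\sum_{i=1}^{r}z_{i}^{n,0}$ of the constraint dual variables: the split-$g$ variables $z_{j}$ with $j\in\{r+1,\dots,2r-1\}$ are overwritten at the start of each outer iteration, so their incoming values are irrelevant.

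Next I would handle the $w=1$ half-step. Since $S_{n,1,j}'=\emptyset$ for all $j$, step 05 of Algorithm \ref{alg:calc-inner-opt} leaves $z_{i}^{n,1}=z_{i}^{n,0}$ for every $i\in\{1,\dots,r\}$, so the constraint variables are untouched. The outer minimisation \eqref{eq:outer-opt} over $S_{n,1}=\{r+1,\dots,2r-1\}$ becomes, after substituting \eqref{eq:f-j-star} and $\lambda_{i}=\frac{1}{r}$, exactly the aggregation problem \eqref{eq:agg} with $\bar z_{i}=z_{i}^{n,0}$. As $g^{*}(y)=\frac{1}{2}\|y\|^{2}+\langle x_{0},y\rangle$ is strictly convex, Proposition \ref{prop:direct-D1-D2} applies verbatim and yields the unique minimiser $z_{j}^{n,1}=-\frac{1}{r}\sum_{i=1}^{r}z_{i}^{n,0}$ for all $j\in\{r+1,\dots,2r-1\}$. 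This confirms the regeneration claim and records the value of every split-$g$ variable entering the $w=2$ half-step.

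The heart of the proof is the $w=2$ half-step. Here I would show that the outer minimisation over $S_{n,2}=\{r\}$ and each of the $r-1$ inner minimisations over the pairs $S_{n,2,j}'=\{j-r,j\}$ reduce, after eliminating the split-$g$ variable via Proposition \ref{prop:reduced-pblm}, to a single common template $\min_{z_{k}}\delta_{C_{k}}^{*}(z_{k})+\frac{1}{r}g^{*}\big(rz_{k}^{n}-Z-rz_{k}\big)$, one instance for each $k\in\{1,\dots,r\}$, with $Z=\sum_{i=1}^{r}z_{i}^{n}$. Verifying that the $k=r$ (outer) and $k<r$ (inner) instances collapse to the identical expression, using the $w=1$ values of the previous paragraph, is routine but must be carried out for both cases. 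Differentiating the smooth term and using $\partial\delta_{C_{k}}^{*}(z_{k})=\{c\in C_{k}:z_{k}\in N_{C_{k}}(c)\}$ converts the optimality condition into the statement that $c_{k}\in C_{k}$ and $z_{k}\in N_{C_{k}}(c_{k})$ for an explicit affine $c_{k}=x_{0}+rz_{k}^{n}-Z-rz_{k}$, i.e. that $c_{k}$ is the projection onto $C_{k}$ of an explicit point; I then match that point and its residual to the product-space update $x_{k}^{n+1}=P_{C_{k}}(u_{k}^{n+1})$, $z_{k}^{n+1}=u_{k}^{n+1}-x_{k}^{n+1}$ to close the induction.

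The delicate step, and the one I expect to be the main obstacle, is the exact identification of the two dual sequences in the paragraph above. Because $\delta_{C_{k}}^{*}$ is positively homogeneous, its subdifferential --- and hence the inclusion $z_{k}\in N_{C_{k}}(c_{k})$ --- is insensitive to positive rescaling of $z_{k}$, so the direction of the Dykstra variable is determined by the normal-cone relation while its magnitude is pinned down only by the affine part of the optimality equation. Consequently one must compute the scaling in the correspondence carefully so that the point projected in Algorithm \ref{alg:Dist-Dyk} coincides with $u_{k}^{n+1}=x_{0}+\frac{1}{r}\big(rz_{k}^{n}-Z\big)$ projected in Algorithm \ref{alg:pdt-space-alg}, and so that the chosen normalisation both reproduces the induction hypothesis and is consistent with the base case $z_{i}^{1}=z_{i}^{1,0}$. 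Getting this scale-matching right, rather than the convexity bookkeeping, is where the argument demands the most care.
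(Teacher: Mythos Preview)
Your approach is essentially the same as the paper's: both argue by induction on $n$ with the auxiliary invariant $x^{n}=x_{0}-\tfrac{1}{r}\sum_{i=1}^{r}z_{i}^{n}$, invoke Proposition~\ref{prop:direct-D1-D2} for the $w=1$ half-step, and then use Proposition~\ref{prop:reduced-pblm} to show that the outer problem on $S_{n,2}=\{r\}$ and the $r-1$ inner problems on $S'_{n,2,j}=\{j-r,j\}$ collapse to identically-shaped one-variable problems. The only cosmetic difference is in the final identification: rather than working through the normal-cone inclusion $z_{k}\in N_{C_{k}}(c_{k})$, the paper recognizes each reduced problem as the Fenchel dual of $\min_{x}\,\delta_{C_{k}}(x)+\tfrac{1}{2}\|x-u_{k}^{n+1}\|^{2}$ and reads off the dual minimizer $u_{k}^{n+1}-P_{C_{k}}(u_{k}^{n+1})$ directly, which sidesteps the scale-matching concern you flag.
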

\begin{proof}
We prove our result by induction. Consider the equalities\begin{subequations}\label{eq_m:ind-hyp}
\begin{eqnarray}
 &  & \begin{array}{c}
z_{i}^{k'}=z_{i}^{k',0}\mbox{ for all }i\in\{1,\dots,r\}\end{array}\label{eq:ind-hyp-1}\\
 & \mbox{ and } & \begin{array}{c}
x^{k'}=x_{0}-\frac{1}{r}\underset{i=1}{\overset{r}{\sum}}z_{i}^{k'}.\end{array}\label{eq:ind-hyp-2}
\end{eqnarray}
\end{subequations}For $k'=1$, \eqref{eq:ind-hyp-1} follows from
the induction hypothesis, and \eqref{eq:ind-hyp-2} follows from line
1 of Algorithm \ref{alg:pdt-space-alg}. We shall show that if \eqref{eq_m:ind-hyp}
holds for $k'=k$, then \eqref{eq_m:ind-hyp} holds for $k'=k+1$. 

First, $\{z_{i}^{k,1}\}_{i\in\{r+1,\dots,2r-1\}}$ are calculated
through \eqref{eq:outer-opt}. Proposition \ref{prop:direct-D1-D2}
shows that 
\begin{equation}
\begin{array}{c}
z_{i}^{k,1}\overset{\eqref{eq:outer-opt},(iii),\eqref{eq:z-j-conclusion}}{=}-\lambda_{i-r}\underset{i=1}{\overset{r}{\sum}}z_{i}^{k,0}\overset{(ii)}{=}-\frac{1}{r}\underset{i=1}{\overset{r}{\sum}}z_{i}^{k,0}.\end{array}\label{eq:pdt-pf-1}
\end{equation}
Next, for $i\in\{1,\dots,r-1\}$, solving the problems \eqref{eq_m:solve-inner-j}
with $S_{k,2,i+r}'\overset{(iv)}{=}\{i,i+r\}$ gives \begin{subequations}\label{eq_m:pdt-form-A}
\begin{eqnarray}
(z_{i}^{k,2},z_{i+r}^{k,2})= & \underset{(z_{i},z_{i+r})}{\arg\min} & \delta_{C_{i}}^{*}(z_{i})+\frac{1}{2r}\|r(z_{i+r}+x_{0})\|^{2}-\frac{1}{2r}\|rx_{0}\|^{2}\label{eq:pdt-form-1}\\
 & \mbox{s.t. } & z_{i}+z_{i+r}=z_{i}^{k,1}+z_{i+r}^{k,1}.\label{eq:pdt-form-2}
\end{eqnarray}
\end{subequations}One can calculate that 
\begin{eqnarray}
z_{i+r}+x_{0} & \overset{\eqref{eq:pdt-form-2}}{=} & \begin{array}{c}
z_{i}^{k,1}+z_{i+r}^{k,1}-z_{i}+x_{0}\overset{\eqref{eq:pdt-pf-1}}{=}z_{i}^{k,1}-\frac{1}{r}\underset{i'=1}{\overset{r}{\sum}}z_{i'}^{k,0}-z_{i}+x_{0},\end{array}\label{eq:z-i-plus-r-form}
\end{eqnarray}
and 
\begin{eqnarray}
\begin{array}{c}
z_{i}^{k,1}-\frac{1}{r}\underset{i'=1}{\overset{r}{\sum}}z_{i'}^{k,0}-z_{i}+x_{0}\end{array} & \!\!\overset{\eqref{eq:ind-hyp-2}}{=}\!\! & \begin{array}{c}
z_{i}^{k,1}+x^{k}-z_{i}\end{array}\label{eq:additional-chain}\\
 & \!\!\overset{\scriptsize\mbox{Alg. \ref{alg:calc-inner-opt}, line 5}}{=}\!\! & z_{i}^{k,0}+x^{k}-z_{i}\overset{\scriptsize\mbox{Alg. \ref{alg:pdt-space-alg}, line 4}}{=}\begin{array}{c}
u_{i}^{k+1}-z_{i}.\end{array}\nonumber 
\end{eqnarray}
so \eqref{eq_m:pdt-form-A} can be rewritten as 
\begin{eqnarray}
z_{i}^{k,2} & \overset{\eqref{eq:pdt-form-1}}{=} & \begin{array}{c}
\underset{z_{i}}{\arg\min}\,\,\delta_{C_{i}}^{*}(z_{i})+\frac{1}{2r}\|r(z_{i+r}+x_{0})\|^{2}\end{array}\label{eq:z-k-2-i}\\
 & \overset{\eqref{eq:z-i-plus-r-form},\eqref{eq:additional-chain}}{=} & \begin{array}{c}
\underset{z_{i}}{\arg\min}\,\,\delta_{C_{i}}^{*}(z_{i})+\frac{1}{2}\|z_{i}-u_{i}^{k+1}\|^{2}.\end{array}\nonumber 
\end{eqnarray}
Also, we have 
\begin{eqnarray}
\begin{array}{c}
-z_{r}-\underset{i=1}{\overset{r-1}{\sum}}z_{i}^{k,1}-\underset{i=r+1}{\overset{2r-1}{\sum}}z_{i}^{k,1}+x_{0}\end{array} & \overset{\eqref{eq:pdt-pf-1}}{=} & \begin{array}{c}
z_{r}^{k,1}-\frac{1}{r}\underset{i=1}{\overset{r}{\sum}}z_{i}^{k,1}+x_{0}-z_{r}.\end{array}\label{eq:z-r-term}
\end{eqnarray}
Next, solving the problem \eqref{eq:outer-opt} with $S_{k,2}\overset{(iii)}{=}\{r\}$
gives 
\begin{eqnarray}
z_{r}^{k,2} & \overset{\eqref{eq:outer-opt}}{=} & \!\!\!\!\!\begin{array}{c}
\underset{z_{r}}{\arg\min}\,\,\delta_{C_{r}}^{*}(z_{r})+\frac{1}{2r}\left\Vert r\left(-z_{r}-\underset{i=1}{\overset{r-1}{\sum}}z_{i}^{k,1}-\underset{i=r+1}{\overset{2r-1}{\sum}}z_{i}^{k,1}+x_{0}\right)\right\Vert ^{2}\end{array}\nonumber \\
 & \overset{\eqref{eq:z-r-term},\eqref{eq:additional-chain}}{=} & \!\!\!\!\!\begin{array}{c}
\underset{z_{r}}{\arg\min}\,\,\delta_{C_{r}}^{*}(z_{r})+\frac{1}{2r}\left\Vert z_{r}-u_{r}^{k+1}\right\Vert ^{2}.\end{array}\label{eq:z-k-2-r}
\end{eqnarray}
From \eqref{eq:z-k-2-i} and \eqref{eq:z-k-2-r}, we can see that
the forms for $z_{i}^{k,2}$ and $z_{r}^{k,2}$ are identical. Similar
to the relationships between \eqref{eq:first-primal} and \eqref{eq:dual-1},
these problems are the (Fenchel) dual problems to 
\begin{equation}
\min_{x}\delta_{C_{i}}(x)+\frac{1}{2}\|x-u_{i}^{k+1}\|^{2},\label{eq:proj-u-k-plus-1}
\end{equation}
which has primal solution $P_{C_{i}}(u_{i}^{k+1})$ and dual solution
$u_{i}^{k+1}-P_{C_{i}}(u_{i}^{k+1})$. (Specifically, one has $0\overset{\eqref{eq:proj-u-k-plus-1}}{\in}\partial\delta_{C_{i}}(P_{C_{i}}(u_{i}^{k+1}))+P_{C_{i}}(u_{i}^{k+1})-u_{i}^{k+1}$,
which gives $P_{C_{i}}(u_{i}^{k+1})\in\partial\delta_{C_{i}}^{*}(u_{i}^{k+1}-P_{C_{i}}(u_{i}^{k+1}))$,
and thus $u_{i}^{k+1}-P_{C_{i}}(u_{i}^{k+1})$ minimizes \eqref{eq:z-k-2-i}.)
Therefore, 
\[
z_{i}^{k,2}=u_{i}^{k+1}-P_{C_{i}}(u_{i}^{k+1})\overset{\scriptsize\mbox{Alg \ref{alg:pdt-space-alg}, lines 5,6}}{=}z_{i}^{k+1}.
\]
Combining with the fact that $z_{i}^{k+1,0}=z_{i}^{k,2}$, we have
$z_{i}^{k+1,0}=z_{i}^{k+1}$ for all $i\in\{1,\dots,r\}$ as needed.
To complete our induction, note that from Algorithm \ref{alg:pdt-space-alg},
we have 
\begin{eqnarray*}
x^{k+1} & \overset{\scriptsize\mbox{line 8}}{=} & \begin{array}{c}
\frac{1}{r}\underset{i=1}{\overset{r}{\sum}}x_{i}^{k+1}\overset{\scriptsize\mbox{line 6}}{=}\frac{1}{r}\underset{i=1}{\overset{r}{\sum}}(u_{i}^{k+1}-z_{i}^{k+1})\end{array}\\
 & \overset{\scriptsize\mbox{line 4}}{=} & \begin{array}{c}
\frac{1}{r}\underset{i=1}{\overset{r}{\sum}}(x^{k}+z_{i}^{k}-z_{i}^{k+1})\overset{\eqref{eq:ind-hyp-2}}{=}x_{0}-\frac{1}{r}\underset{i=1}{\overset{r}{\sum}}z_{i}^{k+1}.\end{array}
\end{eqnarray*}

\end{proof}

\section{Convergence}

In this section, we prove a convergence result for Algorithm \ref{alg:Dist-Dyk}
combined with Algorithm \ref{alg:calc-inner-opt}. Our result would
cover the case of the product space decomposition as well as the original
Dykstra's algorithm. 

Throughout this section, we make the following assumption on $g(\cdot)$
and $\{\lambda_{i}\}_{i=0}^{r-1}$. 
\begin{assumption}
\label{assu:g-and-lambda}Assume $m\geq0$, $\lambda_{i}=\frac{1}{m+1}$
for all $i\in\{0,\dots,m\}$, and $g(x)=\frac{m+1}{2}\|x-x_{0}\|^{2}$.
The term $\lambda_{0}g^{*}(\cdots)$ in \eqref{eq:outer-opt} becomes
\begin{equation}
\frac{1}{2}\left\Vert -\sum_{i\in S_{n,w}}z_{i}-\sum_{i\notin S_{n,w}}z_{i}^{n,w-1}+x_{0}\right\Vert ^{2}-\frac{1}{2}\|x_{0}\|^{2}.\label{eq:form-for-g}
\end{equation}

\end{assumption}
We define $v^{n,w}\in X$ and $x^{n,w}\in X$ to be\begin{subequations}\label{eq_m:from-10-13}
\begin{eqnarray}
v^{n,w} & := & \begin{array}{c}
\underset{i=1}{\overset{r+m}{\sum}}z_{i}^{n,w}\end{array}\label{eq:from-10}\\
\mbox{ and }x^{n,w} & := & \begin{array}{c}
x_{0}-v^{n,w}.\end{array}\label{eq:From-13}
\end{eqnarray}
\end{subequations}
\begin{assumption}
\label{assu:ABC}For each $n$ and $i$, there is an index $p(n,i)$
in $\{1,\dots,\bar{w}\}$ such that 
\begin{itemize}
\item [(A)]For all $i\in\{1,\dots,r+m\}$, 

\begin{itemize}
\item [(i)]$i\notin S_{n,w}$ and $i\notin S_{n,w,j}'$ for all $w\in\{p(n,i)+1,\dots,\bar{w}\}$
and $j\in\{r+1,\dots,r+m\}$, and
\item [(ii)]Either $i\in S_{n,p(n,i)}$, or $i\in S_{n,p(n,i),j}'$ for
some $j\in\{r+1,\dots,r+m\}$.
\end{itemize}
\item []This can be easily checked using line 5 of Algorithm \ref{alg:calc-inner-opt}
to lead to 
\begin{equation}
z_{i}^{n,p(n,i)}=z_{i}^{n,p(n,i)+1}=\cdots=z_{i}^{n,\bar{w}}\mbox{ for all }i\in\{1,\dots,r+m\}.\label{eq:z-p-equals-z-w}
\end{equation}

\item [(B)]If $i\in\{1,\dots,r\}$ and $i\in S_{n,p(n,i),j}'$ for some
(unique) $j\in\{r+1,\dots,r+m\}$ (including the case $i=j$).

\begin{itemize}
\item There is some $q(n,i)$ in $\{1,\dots,p(n,i)-1\}$, such that $j\in S_{n,q(n,i)}$,
and 
\begin{eqnarray*}
 &  & S_{n,p(n,i),j}'\cap S_{n,w}=\emptyset\mbox{ and }S_{n,p(n,i),j}'\cap S_{n,w,j'}'=\emptyset\\
 &  & \qquad\mbox{ for all }w\in\{q(n,i)+1,\dots,p(n,i)-1\}\mbox{ and }j'\in\{r+1,\dots,r+m\}.
\end{eqnarray*}

\end{itemize}
\item []This can be easily checked using line 5 of Algorithm \ref{alg:calc-inner-opt}
to lead to 
\begin{equation}
z_{i'}^{n,q(n,i)}=z_{i'}^{n,q(n,i)+1}=\cdots=z_{i'}^{n,p(n,i)-1}\mbox{ for all }i'\in S_{n,p(n,i),j}'.\label{eq:z-q-equal-z-p}
\end{equation}

\end{itemize}
\end{assumption}
Assumptions \ref{assu:g-and-lambda} and \ref{assu:ABC}(B) can be
further generalized, but we feel that they are enough to capture the
main ideas needed for more general cases. Moreover, one can check
that the classical Dykstra's algorithm and the product space formulation
satisfy these assumptions. Assumptions \ref{assu:ABC}(A) implies
that all the components of $z$ are updated in an iteration.

\subsection{Proof of convergence}
\begin{claim}
\label{claim:Fenchel-duality}For all $i\in S_{n,w}$, we have 
\begin{enumerate}
\item [(a)]$-x^{n,w}+\partial h_{i}^{*}(z_{i}^{n,w})\ni0$,
\item [(b)]$-z_{i}^{n,w}+\partial h_{i}(x^{n,w})\ni0$, and
\item [(c)]$h_{i}(x^{n,w})+h_{i}^{*}(z_{i}^{n,w})=\langle x^{n,w},z_{i}^{n,w}\rangle$. 
\end{enumerate}
\end{claim}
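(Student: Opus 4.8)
The plan is to obtain part (a) directly from the first-order optimality condition of the outer minimization \eqref{eq:outer-opt}, and then to read off parts (b) and (c) from the standard conjugate--subgradient equivalence for a proper closed convex function. In effect, (a), (b) and (c) are three equivalent encodings of the same Fenchel--Young equality, so the only substantive work is establishing (a).

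To prove (a), I would specialize \eqref{eq:outer-opt} using Assumption \ref{assu:g-and-lambda}, so that its objective reads $\sum_{i\in S_{n,w}}h_{i}^{*}(z_{i})$ plus the quadratic term \eqref{eq:form-for-g}. The separable sum $\sum_{i\in S_{n,w}}h_{i}^{*}(z_{i})$ is proper closed convex, and the quadratic term is finite and differentiable everywhere, so the Moreau--Rockafellar sum rule applies with no constraint qualification and the optimality condition at the minimizer $\{z_{i}^{n,w}\}_{i\in S_{n,w}}$ decouples coordinatewise: for each $i\in S_{n,w}$ one has $0\in\partial h_{i}^{*}(z_{i}^{n,w})+d_{i}$, where $d_{i}$ denotes the gradient of \eqref{eq:form-for-g} with respect to $z_{i}$.

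The key computation is to identify $d_{i}$. Differentiating \eqref{eq:form-for-g} in $z_{i}$ gives $d_{i}=\sum_{i'\in S_{n,w}}z_{i'}^{n,w}+\sum_{i'\notin S_{n,w}}z_{i'}^{n,w-1}-x_{0}$, which a priori depends on the frozen iterates $z_{i'}^{n,w-1}$. Here I would invoke \eqref{eq:inner-opt-2}, which says precisely that $\sum_{i'\notin S_{n,w}}z_{i'}^{n,w-1}=\sum_{i'\notin S_{n,w}}z_{i'}^{n,w}$; substituting this collapses the two sums into $\sum_{i'=1}^{r+m}z_{i'}^{n,w}=v^{n,w}$ by \eqref{eq:from-10}, so that $d_{i}=v^{n,w}-x_{0}=-x^{n,w}$ by \eqref{eq:From-13}, independently of $i$. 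Feeding this back into the decoupled optimality condition yields $-x^{n,w}+\partial h_{i}^{*}(z_{i}^{n,w})\ni0$, which is (a). This bookkeeping---making sure the frozen block can be swapped via the inner-optimization constraint \eqref{eq:inner-opt-2} so that every coordinate $i\in S_{n,w}$ sees the same gradient $-x^{n,w}$---is the main (if modest) obstacle; everything else is routine convex calculus.

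Finally, (b) and (c) follow with no further algorithmic input. For a proper closed convex function $h_{i}$ on the finite-dimensional Hilbert space $X$, the equivalences $x^{n,w}\in\partial h_{i}^{*}(z_{i}^{n,w})\Longleftrightarrow z_{i}^{n,w}\in\partial h_{i}(x^{n,w})\Longleftrightarrow h_{i}(x^{n,w})+h_{i}^{*}(z_{i}^{n,w})=\langle x^{n,w},z_{i}^{n,w}\rangle$ hold. The first equivalence turns (a) into (b), and the second turns it into the Fenchel--Young equality (c).
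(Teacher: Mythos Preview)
Your proposal is correct and follows essentially the same route as the paper: take the first-order optimality condition of \eqref{eq:outer-opt} in the specialized quadratic form \eqref{eq:form-for-g}, use \eqref{eq:inner-opt-2} to replace $\sum_{i'\notin S_{n,w}}z_{i'}^{n,w-1}$ by $\sum_{i'\notin S_{n,w}}z_{i'}^{n,w}$ so that the gradient term collapses to $-x^{n,w}$, and then invoke the standard Fenchel--Young equivalences for (b) and (c). The only difference is that you spell out the sum-rule and decoupling justification more explicitly than the paper does.
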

\begin{proof}
By taking the optimality conditions in \eqref{eq:outer-opt} with
respect to $z_{i}$ for $i\in S_{n,w}$, we have 
\begin{eqnarray*}
0 & \overset{\eqref{eq:outer-opt},\eqref{eq:form-for-g}}{\in} & \partial h_{i}^{*}(z_{i}^{n,w})+\sum_{i\in S_{n,w}}z_{i}^{n,w}+\sum_{i\notin S_{n,w}}z_{i}^{n,w-1}-x_{0}\\
 & \overset{\eqref{eq:inner-opt-2}}{=} & \partial h_{i}^{*}(z_{i}^{n,w})+\sum_{i=1}^{r}z_{i}^{n,w}-x_{0}\overset{\eqref{eq_m:from-10-13}}{=}\partial h_{i}^{*}(z_{i}^{n,w})-x^{n,w},
\end{eqnarray*}
so (a) holds. The equivalences of (a), (b) and (c) is standard. 
\end{proof}
Another rather standard and elementary result is as follows. We refer
to \cite{Pang_Dyk_spl} for its (short) proof.
\begin{prop}
\label{prop:primal-form}(On solving \eqref{eq:outer-opt}) If a minimizer
$z^{n,w}$ for \eqref{eq:outer-opt} exists, then the $x^{n,w}$ in
\eqref{eq:From-13} satisfies 
\begin{equation}
x^{n,w}=\begin{array}{c}
\underset{x\in X}{\arg\min}\underset{i\in S_{n,w}}{\sum}h_{i}(x)+\frac{1}{2}\bigg\| x-\bigg(x_{0}-\underset{i\notin S_{n,w}}{\sum}z_{i}^{n,w}\bigg)\bigg\|^{2}.\end{array}\label{eq:primal-subpblm}
\end{equation}
Conversely, if $x^{n,w}$ solves \eqref{eq:primal-subpblm} with the
dual variables $\{\tilde{z}_{i}^{n,w}\}_{i\in S_{n,w}}$ satisfying
\begin{equation}
\begin{array}{c}
\tilde{z}_{i}^{n,w}\in\partial h_{i}(x^{n,w})\mbox{ and }x^{n,w}-x_{0}+\underset{i\notin S_{n,w}}{\sum}z_{i}^{n,w}+\underset{i\in S_{n,w}}{\sum}\tilde{z}_{i}^{n,w}=0,\end{array}\label{eq:primal-optim-cond}
\end{equation}
then $\{\tilde{z}_{i}^{n,w}\}_{i\in S_{n,w}}$ solves \eqref{eq:outer-opt}. 
\end{prop}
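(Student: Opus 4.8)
The plan is to recognize \eqref{eq:outer-opt} and \eqref{eq:primal-subpblm} as a Fenchel primal--dual pair and then simply transcribe first-order optimality conditions between them. Write $c:=x_{0}-\sum_{i\notin S_{n,w}}z_{i}^{n,w}$; by the constraint \eqref{eq:inner-opt-2} this also equals $x_{0}-\sum_{i\notin S_{n,w}}z_{i}^{n,w-1}$, so under Assumption \ref{assu:g-and-lambda} the expression \eqref{eq:form-for-g} inside \eqref{eq:outer-opt} is $\frac{1}{2}\|c-\sum_{i\in S_{n,w}}z_{i}\|^{2}-\frac{1}{2}\|x_{0}\|^{2}$, whose additive constant is irrelevant to the $\arg\min$. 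Thus \eqref{eq:outer-opt} reads $\min_{z_{i}:i\in S_{n,w}}\sum_{i\in S_{n,w}}h_{i}^{*}(z_{i})+\frac{1}{2}\|c-\sum_{i\in S_{n,w}}z_{i}\|^{2}$, which is precisely the Fenchel dual of the strongly convex problem \eqref{eq:primal-subpblm}, namely $\min_{x}\sum_{i\in S_{n,w}}h_{i}(x)+\frac{1}{2}\|x-c\|^{2}$. The only nonroutine ingredient is keeping this correspondence of the constant $c$ straight, since the outer problem is written with the stale values $z_{i}^{n,w-1}$ while the primal problem is written with the updated $z_{i}^{n,w}$; equation \eqref{eq:inner-opt-2} is exactly what makes the two agree.

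For the forward direction, suppose a minimizer $z^{n,w}$ of \eqref{eq:outer-opt} exists. Claim \ref{claim:Fenchel-duality}(b) already gives $z_{i}^{n,w}\in\partial h_{i}(x^{n,w})$ for every $i\in S_{n,w}$; summing over $i\in S_{n,w}$ and invoking the subdifferential sum rule yields $\sum_{i\in S_{n,w}}z_{i}^{n,w}\in\partial\big(\sum_{i\in S_{n,w}}h_{i}\big)(x^{n,w})$. On the other hand, from the definitions \eqref{eq_m:from-10-13} of $v^{n,w}$ and $x^{n,w}$ one computes $\sum_{i\in S_{n,w}}z_{i}^{n,w}=c-x^{n,w}$. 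Hence $c-x^{n,w}\in\partial\big(\sum_{i\in S_{n,w}}h_{i}\big)(x^{n,w})$, which is exactly the stationarity condition $0\in\partial\big(\sum_{i\in S_{n,w}}h_{i}\big)(x)+(x-c)$ characterizing the unique minimizer of the strongly convex problem \eqref{eq:primal-subpblm}. This proves $x^{n,w}$ solves \eqref{eq:primal-subpblm}.

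For the converse, suppose $x^{n,w}$ solves \eqref{eq:primal-subpblm} together with certificates $\{\tilde{z}_{i}^{n,w}\}_{i\in S_{n,w}}$ satisfying \eqref{eq:primal-optim-cond}. Since \eqref{eq:outer-opt} is convex, it suffices to verify its per-coordinate stationarity, namely $0\in\partial h_{i}^{*}(\tilde{z}_{i}^{n,w})+\big(\sum_{i\in S_{n,w}}\tilde{z}_{i}^{n,w}+\sum_{i\notin S_{n,w}}z_{i}^{n,w-1}-x_{0}\big)$ for each $i\in S_{n,w}$, the bracket being the gradient of \eqref{eq:form-for-g} as computed in the proof of Claim \ref{claim:Fenchel-duality}. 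The membership $\tilde{z}_{i}^{n,w}\in\partial h_{i}(x^{n,w})$ from \eqref{eq:primal-optim-cond} is equivalent, by the conjugate subgradient correspondence $z\in\partial h_{i}(x)\Leftrightarrow x\in\partial h_{i}^{*}(z)$, to $x^{n,w}\in\partial h_{i}^{*}(\tilde{z}_{i}^{n,w})$, so it remains only to show the bracket equals $-x^{n,w}$. Substituting the linear relation in \eqref{eq:primal-optim-cond} to eliminate $\sum_{i\in S_{n,w}}\tilde{z}_{i}^{n,w}$, and then using \eqref{eq:inner-opt-2} to swap $\sum_{i\notin S_{n,w}}z_{i}^{n,w-1}$ for $\sum_{i\notin S_{n,w}}z_{i}^{n,w}$, collapses the bracket to $-x^{n,w}$, completing the verification.

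The main obstacle, such as it is, is purely bookkeeping: tracking the constant $c$ through its two representations (stale versus updated off-block variables) and applying \eqref{eq:inner-opt-2} at the right moments, together with the routine passage between subgradients of $h_{i}$ and of $h_{i}^{*}$. There is no analytic difficulty, and in particular no constraint qualification is needed: existence of the primal minimizer is automatic from the strong convexity induced by the proximal term, while existence of the dual minimizer is taken as a hypothesis.
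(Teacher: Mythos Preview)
Your proof is correct. The paper does not actually supply a proof of this proposition; it calls the result ``rather standard and elementary'' and defers to \cite{Pang_Dyk_spl} for the short argument, and what you have written---recognizing \eqref{eq:outer-opt} and \eqref{eq:primal-subpblm} as a Fenchel pair and translating first-order conditions in each direction, with \eqref{eq:inner-opt-2} reconciling the $z_{i}^{n,w-1}$ and $z_{i}^{n,w}$ off-block sums---is exactly that standard argument.

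One small remark on presentation: in the forward direction your phrase ``invoking the subdifferential sum rule'' might make a reader pause, since the full sum rule generally needs a constraint qualification. But you only use the trivial inclusion $\sum_{i}\partial h_{i}(x)\subset\partial\big(\sum_{i}h_{i}\big)(x)$, which holds unconditionally, so there is no gap.
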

For any $x\in X$ and $z\in X^{r+1}$, the analogue of \cite[(8)]{Gaffke_Mathar}
is 
\begin{eqnarray}
 &  & \begin{array}{c}
\frac{1}{2}\|x_{0}-x\|^{2}+\underset{i=1}{\overset{r+m}{\sum}}h_{i}(x)-F(z)\end{array}\label{eq:From-8}\\
 & \overset{\eqref{eq:dual}}{=} & \begin{array}{c}
\frac{1}{2}\|x_{0}-x\|^{2}+\underset{i=1}{\overset{r+m}{\sum}}[h_{i}(x)+h_{i}^{*}(z_{i})]-\left\langle x_{0},\underset{i=1}{\overset{r+m}{\sum}}z_{i}\right\rangle +\frac{1}{2}\left\Vert \underset{i=1}{\overset{r+m}{\sum}}z_{i}\right\Vert ^{2}\end{array}\nonumber \\
 & \overset{\scriptsize\mbox{Fenchel duality}}{\geq} & \begin{array}{c}
\frac{1}{2}\|x_{0}-x\|^{2}+\underset{i=1}{\overset{r+m}{\sum}}\langle x,z_{i}\rangle-\left\langle x_{0},\underset{i=1}{\overset{r+m}{\sum}}z_{i}\right\rangle +\frac{1}{2}\left\Vert \underset{i=1}{\overset{r+m}{\sum}}z_{i}\right\Vert ^{2}\end{array}\nonumber \\
 & = & \begin{array}{c}
\frac{1}{2}\left\Vert x_{0}-x-\underset{i=1}{\overset{r+m}{\sum}}z_{i}\right\Vert ^{2}\geq0.\end{array}\nonumber 
\end{eqnarray}

We now prove our convergence result.
\begin{thm}
\label{thm:convergence}(Convergence result) Suppose Assumptions \ref{assu:g-and-lambda}
and \ref{assu:ABC} hold. Consider the sequence $\{z^{n,w}\}_{{1\leq n<\infty\atop 0\leq w\leq\bar{w}}}\subset X^{r+m}$
generated by Algorithm \ref{alg:Dist-Dyk} with Algorithm \ref{alg:calc-inner-opt}
used to calculate \eqref{eq_m:inner-opt}. Suppose that 
\begin{itemize}
\item The value of \eqref{eq:first-primal} (i.e., the primal objective
value) is $\alpha$ and is finite, and the value of \eqref{eq:dual}
(i.e., the dual objective value) is $\beta$. 
\item $\|z^{n,\bar{w}}\|\in O(\sqrt{n})$.
\item Minimizers can be obtained for the problems \eqref{eq:outer-opt}
and \eqref{eq_m:solve-inner-j}.
\end{itemize}
The sequences $\{v^{n,w}\}_{{1\leq n<\infty\atop 0\leq w\leq\bar{w}}}\subset X$
and $\{x^{n,w}\}_{{1\leq n<\infty\atop 0\leq w\leq\bar{w}}}\subset X$
are then deduced from \eqref{eq_m:from-10-13}, and we have:
\begin{enumerate}
\item [(i)]The sum 
\[
\begin{array}{c}
\underset{n=1}{\overset{\infty}{\sum}}\underset{w=1}{\overset{\bar{w}}{\sum}}\Bigg[\|v^{n,w}-v^{n,w-1}\|^{2}+\underset{{j:r+1\leq j\leq r+m\atop S_{n,w,j}'\neq\emptyset}}{\sum}\|z_{j}^{n,w}-z_{j}^{n,w-1}\|^{2}\Bigg]\end{array}
\]
is finite, and $\{F(z^{n,\bar{w}})\}_{n=1}^{\infty}$ is nondecreasing,
where $F(\cdot)$ is as defined in \eqref{eq:dual}.
\item [(ii)]There is a constant $C$ such that $\|v^{n,w}\|^{2}\leq C$
for all $n\in\mathbb{N}$ and $w\in\{1,\dots,\bar{w}\}$. 
\item [(iii)]Let 
\begin{equation}
\begin{array}{c}
\gamma_{n}:=\underset{w=1}{\overset{\bar{w}}{\sum}}\bigg(\|v^{n,w}-v^{n,w-1}\|+\underset{{j:r+1\leq j\leq r+m\atop S_{n,w,j}'\neq\emptyset}}{\sum}\|z_{j}^{n,w}-z_{j}^{n,w-1}\|\bigg).\end{array}\label{eq:def-gamma}
\end{equation}
There exists a subsequence $\{v^{n_{k},\bar{w}}\}_{k=1}^{\infty}$
of $\{v^{n,\bar{w}}\}_{n=1}^{\infty}$ which converges to some $v^{*}\in X$
and that 
\begin{equation}
\lim_{k\to\infty}\gamma_{n_{k}}\sqrt{n_{k}}=0.\label{eq:lim-sum-norm-z}
\end{equation}

\item [(iv)]For all $i\in\{1,\dots,r+m\}$ and $n\in\mathbb{N}$, we can
find  $x_{i}^{n}\in\partial h_{i}^{*}(z_{i}^{n,\bar{w}})$ such that
$\|x_{i}^{n}-(x_{0}-v^{n,\bar{w}})\|\leq\gamma_{n}$. 
\item [(v)]For the $v^{*}$ in (iii), $x_{0}-v^{*}$ is the minimizer of
the primal problem (P1), and $\lim_{k\to\infty}F(z^{n_{k},\bar{w}})=\frac{1}{2}\|v^{*}\|^{2}+\sum_{i=1}^{r+m}h_{i}(x_{0}-v^{*})$. 
\end{enumerate}
The properties (i) to (v) in turn imply that $\lim_{n\to\infty}x^{n,\bar{w}}$
exists, and $x_{0}-v^{*}$ is the primal minimizer of \eqref{eq:first-primal}.\end{thm}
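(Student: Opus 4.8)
The plan is to follow the classical Boyle--Dykstra / Gaffke--Mathar scheme built around the duality-gap identity \eqref{eq:From-8}, but to compensate for the possible absence of a dual minimizer by exploiting the growth hypothesis $\|z^{n,\bar{w}}\|\in O(\sqrt{n})$. Throughout I record that, under Assumption \ref{assu:g-and-lambda}, one has $F(z)=-\sum_i h_i^*(z_i)-\frac{1}{2}\|v\|^2+\langle x_0,v\rangle$ with $v=\sum_i z_i$, so the quadratic part is $1$-strongly concave in $v$, while for $j\in\{r+1,\dots,r+m\}$ the term $h_j^*(z_j)=\frac{1}{2}\|z_j\|^2+\langle x_0,z_j\rangle$ is $1$-strongly convex in $z_j$.

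\emph{Parts (i) and (ii).} First I would show each elementary update raises $F$ by a controllable amount. For the outer step \eqref{eq:outer-opt}, passing from $z^{n,w-1}$ to the intermediate point $\tilde z$ (equal to $z^{n,w}$ on $S_{n,w}$ and to $z^{n,w-1}$ elsewhere, so $v(\tilde z)=v^{n,w}$), a Bregman/optimality argument using $x^{n,w}\in\partial h_i^*(z_i^{n,w})$ from Claim \ref{claim:Fenchel-duality}(a) and the strong concavity in $v$ gives $F(\tilde z)-F(z^{n,w-1})\geq\frac{1}{2}\|v^{n,w}-v^{n,w-1}\|^2$; the inner step $\tilde z\to z^{n,w}$ fixes $v$, and the strong convexity of each $h_j^*$ yields $F(z^{n,w})-F(\tilde z)\geq\frac{1}{2}\sum_j\|z_j^{n,w}-z_j^{n,w-1}\|^2$ summed over the disjoint groups of \eqref{eq_m:solve-inner-j}. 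Telescoping over $(n,w)$ and using $F\leq\beta\leq\alpha<\infty$ (weak duality) gives the summability and the monotonicity of $\{F(z^{n,\bar w})\}$ in (i). For (ii), insert any fixed $x$ with finite primal value and $z=z^{n,w}$ into \eqref{eq:From-8}; since $F(z^{n,w})\geq F(z^{1,0})$, the right side $\frac{1}{2}\|x_0-x-v^{n,w}\|^2$ is uniformly bounded, so $\{v^{n,w}\}$ is bounded.

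\emph{Parts (iii) and (iv).} Writing $a_n$ for the inner sum in (i), $\sum_n a_n<\infty$ forces $\liminf_n n\,a_n=0$, producing $\{n_k\}$ with $n_k a_{n_k}\to0$; Cauchy--Schwarz gives $\gamma_n^2\leq K a_n$ for a constant $K\leq\bar w(1+m)$, hence $\gamma_{n_k}\sqrt{n_k}\to0$, and a further extraction using boundedness arranges $v^{n_k,\bar w}\to v^*$. For (iv), for each $i$ let $p=p(n,i)$ be its last update (Assumption \ref{assu:ABC}(A)), so $z_i^{n,p}=z_i^{n,\bar w}$ by \eqref{eq:z-p-equals-z-w}. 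If $i\in S_{n,p}$, Claim \ref{claim:Fenchel-duality}(a) gives $x^{n,p}\in\partial h_i^*(z_i^{n,\bar w})$ with $\|x^{n,p}-x^{n,\bar w}\|\leq\gamma_n$; if instead $i$ was updated in inner group $j$ at step $p$, the Lagrange condition for \eqref{eq_m:solve-inner-j} gives a common multiplier $\mu\in\partial h_i^*(z_i^{n,p})$ equal to $x_0+z_j^{n,p}$ (as $\partial h_j^*(z_j)=z_j+x_0$). Using Assumption \ref{assu:ABC}(B), \eqref{eq:z-q-equal-z-p}, and $z_j^{n,q}=-v^{n,q}$ from Claim \ref{claim:Fenchel-duality}(a) at the outer step $q=q(n,i)$, I would rewrite $\mu-x^{n,\bar w}=(v^{n,\bar w}-v^{n,q})+(z_j^{n,p}-z_j^{n,p-1})$, whose norm is at most $\gamma_n$ by \eqref{eq:def-gamma}; taking $x_i^n:=\mu$ settles (iv).

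\emph{Part (v) and the final limit.} From $x_i^{n_k}\in\partial h_i^*(z_i^{n_k,\bar w})$ and Fenchel equality, $h_i^*(z_i^{n_k,\bar w})=\langle x_i^{n_k},z_i^{n_k,\bar w}\rangle-h_i(x_i^{n_k})$; substituting into $F$ and writing $x_i^{n_k}=x^{n_k,\bar w}+e_i^{n_k}$ with $\|e_i^{n_k}\|\leq\gamma_{n_k}$, the diagonal part collapses to $\frac{1}{2}\|v^{n_k,\bar w}\|^2+\sum_i h_i(x_i^{n_k})$ and the cross term is bounded by $\gamma_{n_k}\sqrt{r+m}\,\|z^{n_k,\bar w}\|=O(\gamma_{n_k}\sqrt{n_k})\to0$ by the growth hypothesis and \eqref{eq:lim-sum-norm-z}. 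Letting $k\to\infty$ with $x_i^{n_k}\to x^*:=x_0-v^*$, lower semicontinuity gives $\frac{1}{2}\|v^*\|^2+\sum_i h_i(x^*)\leq F_\infty\leq\beta\leq\alpha$, while the left side is the primal objective of \eqref{eq:first-primal} at $x^*$ and hence $\geq\alpha$; so everything coincides, $x^*$ is the (unique, by strong convexity) primal minimizer, $F_\infty=\alpha$, and (v) follows. Finally, feeding $x=x^*$ and $z=z^{n,\bar w}$ into \eqref{eq:From-8} gives $\frac{1}{2}\|v^*-v^{n,\bar w}\|^2\leq\alpha-F(z^{n,\bar w})\to0$, so the \emph{full} sequence obeys $v^{n,\bar w}\to v^*$ and $x^{n,\bar w}\to x_0-v^*$.

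\emph{Main obstacle.} I expect (iv) to be the crux: pinning down the last update $p(n,i)$, separating the outer and inner cases, and, in the inner case, chaining back to the outer step $q(n,i)$ where the copy-variable $z_j$ was fixed to $-v^{n,q}$, so that the Lagrange multiplier is comparable to $x^{n,\bar w}$ up to $\gamma_n$ — this is precisely where Assumption \ref{assu:ABC}(B) is indispensable. The second delicate point is the cross-term estimate in (v), the sole place the hypothesis $\|z^{n,\bar w}\|\in O(\sqrt n)$ is used and what makes the argument survive the possible non-attainment of the dual optimum.
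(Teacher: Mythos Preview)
Your proposal is correct and follows essentially the same route as the paper: the strong-concavity descent inequalities for (i), the duality-gap identity \eqref{eq:From-8} for (ii) and the final convergence, the $\liminf n\,a_n=0$ / Cauchy--Schwarz extraction for (iii), the case analysis on $p(n,i)$ for (iv), and the Fenchel-equality substitution plus $O(\sqrt{n})$ cross-term control for (v). Your treatment of (iv) is in fact slightly cleaner than the paper's, since you identify the Lagrange multiplier of \eqref{eq_m:solve-inner-j} directly as $x_0+z_j^{n,p}$ (because $\nabla h_j^*(z)=z+x_0$) and thereby handle the paper's separate cases $i\in\{1,\dots,r\}$ and $i=j\in\{r+1,\dots,r+m\}$ in one stroke; the paper instead writes out $x_i^n$ via \eqref{eq:choose-x-n-i} and verifies the same identity through \eqref{eq:z-n-q}--\eqref{eq:manipulate-z-i-prime}.
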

\begin{proof}
 We first remark on the proof of this result. The proof in \cite{Pang_Dyk_spl}
was adapted from \cite{Gaffke_Mathar}. Part (iv) is new, and arises
from considering \eqref{eq_m:inner-opt}. This also results in changes
to the statements of the other parts of the corresponding result in
\cite{Pang_Dyk_spl}. 

We first show that (i) to (v) implies the final assertion. For all
$n\in\mathbb{N}$ we have, from weak duality, 
\begin{equation}
\begin{array}{c}
F(z^{n,\bar{w}})\leq\beta\leq\alpha\leq\frac{1}{2}\|x_{0}-(x_{0}-v^{*})\|^{2}+\underset{i=1}{\overset{r+m}{\sum}}h_{i}(x_{0}-v^{*}),\end{array}\label{eq:weak-duality}
\end{equation}
hence $\beta=\alpha=\frac{1}{2}\|x_{0}-(x_{0}-v^{*})\|^{2}+h(x_{0}-v^{*})$,
and that $x_{0}-v^{*}=\arg\min_{x}\sum_{i=1}^{r+m}h_{i}(x)+\frac{1}{2}\|x-x_{0}\|^{2}$.
Since the values $\{F(z^{n,\bar{w}})\}_{n=1}^{\infty}$ are nondecreasing
in $n$, we have 
\[
\begin{array}{c}
\underset{n\to\infty}{\lim}F(z^{n,\bar{w}})=\frac{1}{2}\|x_{0}-(x_{0}-v^{*})\|^{2}+\underset{i=1}{\overset{r+m}{\sum}}h_{i}(x_{0}-v^{*}),\end{array}
\]
and (substituting $x=x_{0}-v^{*}$ in \eqref{eq:From-8}) 
\begin{eqnarray*}
 &  & \begin{array}{c}
\frac{1}{2}\|x_{0}-(x_{0}-v^{*})\|^{2}+h(x_{0}-v^{*})-F(z^{n,\bar{w}})\end{array}\\
 & \overset{\eqref{eq:From-8},\eqref{eq:from-10}}{\geq} & \begin{array}{c}
\frac{1}{2}\|x_{0}-(x_{0}-v^{*})-v^{n,\bar{w}}\|^{2}\end{array}\\
 & \overset{\eqref{eq:From-13}}{=} & \begin{array}{c}
\frac{1}{2}\|x^{n,\bar{w}}-(x_{0}-v^{*})\|^{2}.\end{array}
\end{eqnarray*}
Hence $\lim_{n\to\infty}x^{n,\bar{w}}$ is the minimizer of (P1). 

It remains to prove assertions (i) to (v).

\textbf{Proof of (i):} For $j\in\{r,\dots,r+m\}$, let $z^{n,w,j}\in X^{r+m}$
be the vector such that 
\[
z_{i}^{n,w,j}=\begin{cases}
z_{i}^{n,w} & \mbox{ if }i\in S_{n,w}\mbox{ or }i\in S_{n,w,j'}'\mbox{ for some }j'\leq j\\
z_{i}^{n,w-1} & \mbox{ otherwise.}
\end{cases}
\]
From the fact that $\{z_{i}^{n,w,r}\}_{i\in S_{n,w}}=\{z_{i}^{n,w}\}_{i\in S_{n,w}}$
is a minimizer for the outer problem \eqref{eq:outer-opt}, we have
\begin{equation}
\begin{array}{c}
F(z^{n,w,r})+\frac{1}{2}\bigg\|\underset{i\in S_{n,w}}{\sum}z_{i}^{n,w}-\underset{i\in S_{n,w}}{\sum}z_{i}^{n,w-1}\bigg\|^{2}\leq F(z^{n,w-1}).\end{array}\label{eq:decrease-1}
\end{equation}
Next, we note that for $j\in\{r+1,\dots,r+m\}$ such that $S_{n,w,j}'\neq\emptyset$,
solving the inner problems sequentially like in line 3 of Algorithm
\ref{alg:calc-inner-opt}, where each optimization problem has the
form \eqref{eq:transformed-dual}, gives 
\begin{equation}
\begin{array}{c}
F(z^{n,w,j})+\frac{1}{2}\bigg\|\underset{i\in S_{n,w,j}'\backslash\{j\}}{\sum}z_{i}^{n,w}-\underset{i\in S_{n,w,j}'\backslash\{j\}}{\sum}z_{i}^{n,w-1}\bigg\|^{2}\leq F(z^{n,w,j-1}).\end{array}\label{eq:decrease-2}
\end{equation}
In view of \eqref{eq:solve-inner-j-2}, we have 
\[
\begin{array}{c}
\underset{i\in S_{n,w,j}'\backslash\{j\}}{\sum}z_{i}^{n,w}-\underset{i\in S_{n,w,j}'\backslash\{j\}}{\sum}z_{i}^{n,w-1}=z_{j}^{n,w-1}-z_{j}^{n,w}.\end{array}
\]
Observe that $z^{n,w,r+m}=z^{n,w}$. We can combine \eqref{eq:decrease-1}
and \eqref{eq:decrease-2} to get 
\begin{equation}
\begin{array}{c}
F(z^{n,w})+\frac{1}{2}\|v^{n,w}-v^{n,w-1}\|^{2}+\underset{{j:r+1\leq j\leq r+m\atop S_{n,w,j}'\neq\emptyset}}{\sum}\frac{1}{2}\|z_{j}^{n,w-1}-z_{j}^{n,w}\|^{2}\leq F(z^{n,w-1}).\end{array}\label{eq:dual-obj-dec-recurrence}
\end{equation}
Next, $F(z^{n,\bar{w}})\leq\alpha$ by weak duality. The proof of
the claim follows from summing \eqref{eq:dual-obj-dec-recurrence}
over all $n$.

\textbf{Proof of (ii):} Substituting $x$ in \eqref{eq:From-8} to
be the primal minimizer $x^{*}$ and $z$ to be $z^{n,w}$, we have
\begin{eqnarray*}
 &  & \begin{array}{c}
\frac{1}{2}\|x_{0}-x^{*}\|^{2}+\underset{i=1}{\overset{r+m}{\sum}}h_{i}(x^{*})-F(z^{1,0})\end{array}\\
 & \overset{\scriptsize\mbox{part (i)}}{\geq} & \begin{array}{c}
\frac{1}{2}\|x_{0}-x^{*}\|^{2}+\underset{i=1}{\overset{r+m}{\sum}}h_{i}(x^{*})-F(z^{n,w})\end{array}\\
 & \overset{\eqref{eq:From-8}}{\geq} & \begin{array}{c}
\frac{1}{2}\left\Vert x_{0}-x^{*}-\underset{i=1}{\overset{r+m}{\sum}}z_{i}^{n,w}\right\Vert ^{2}\overset{\eqref{eq:from-10}}{=}\frac{1}{2}\|x_{0}-x^{*}-v^{n,w}\|^{2}.\end{array}
\end{eqnarray*}
The conclusion is immediate.

\textbf{Proof of (iii): } We first show that 
\begin{equation}
\begin{array}{c}
\underset{n\to\infty}{\liminf}\,\,\gamma_{n}\sqrt{n}=0.\end{array}\label{eq:root-n-dec}
\end{equation}
Seeking a contradiction, suppose instead that there is an $\epsilon>0$
and $\bar{n}>0$ such that if $n>\bar{n}$, then $\gamma_{n}\sqrt{n}>\epsilon$.
By the Cauchy Schwarz inequality, we have  
\[
\frac{\epsilon^{2}}{n}<\gamma_{n}^{2}\leq\bar{w}(1+m)\underset{w=1}{\overset{\bar{w}}{\sum}}\bigg(\|v^{n,w}-v^{n,w-1}\|^{2}+\underset{{r+1\leq j\leq r+m\atop S_{n,w,j}'\neq\emptyset}}{\sum}\|z_{j}^{n,w}-z_{j}^{n,w-1}\|^{2}\bigg).
\]
This contradicts the earlier claim in (i).

Through \eqref{eq:root-n-dec}, we find a sequence $\{v^{n_{k}}\}_{k=1}^{\infty}$
such that $\lim_{k\to\infty}\gamma_{n_{k}}\sqrt{n_{k}}=0$, and by
part (ii), we can assume $\lim_{k\to\infty}v^{n_{k}}$ exists, say
$v^{*}$. This completes the proof of (iii). 

\textbf{Proof of (iv): }

If $i\in S_{n,p(n,i)}$, then $z_{i}^{n,\bar{w}}\overset{\eqref{eq:z-p-equals-z-w}}{=}z_{i}^{n,p(n,i)}$,
and by Claim \ref{claim:Fenchel-duality}, we have $x^{n,p(n,i)}\in\partial h_{i}(z_{i}^{n,p(n,i)})$.
We also have 
\begin{equation}
\|x^{n,p(n,i)}-(x_{0}-v^{n,\bar{w}})\|\overset{\eqref{eq:From-13}}{=}\|v^{n,p(n,i)}-v^{n,\bar{w}}\|\overset{\eqref{eq:def-gamma}}{\leq}\gamma_{n}.\label{eq:conv-ineq-1}
\end{equation}
So in this case, $x_{i}^{n}$ can be chosen to be $x^{n,p(n,i)}$. 

Next, if $i\notin S_{n,p(n,i)}$, then $i\in S_{n,p(n,i),j}'$ for
some $j\in\{r+1,\dots,r+m\}$. We first consider the case where $i\in\{1,\dots,r\}$.
We claim that $x_{i}^{n}$ can be chosen to be 
\begin{equation}
\begin{array}{c}
x_{i}^{n}:=x_{0}-\underset{i'\in S_{n,p(n,i),j}'\backslash\{j\}}{\sum}z_{i'}^{n,p(n,i)}-\underset{i'\notin S_{n,p(n,i),j}'\backslash\{j\}}{\sum}z_{i'}^{n,q(i)}.\end{array}\label{eq:choose-x-n-i}
\end{equation}
We look at how $x_{i}^{n}$ is related to $x^{n,q(n,i)}$. Recall
that $x^{n,q(n,i)}$ is derived from $z^{n,q(n,i)}$ with $j\in S_{n,q(n,i)}$,
and $\{z_{i'}^{n,q(n,i)}\}_{i'\in S_{n,q(n,i)}}$ is a minimizer of
\[
\underset{z_{i'}:i'\in S_{n,q(n,i)}}{\min}\sum_{i'\in S_{n,q(n,i)}}h_{i'}^{*}(z_{i'})+\frac{1}{2}\left\Vert -\sum_{i'\in S_{n,q(n,i)}}z_{i'}-\sum_{i'\notin S_{n,q(n,i)}}z_{i'}^{n,q(n,i)-1}+x_{0}\right\Vert ^{2}.
\]
By looking at the variable $z_{j}^{n,q(n,i)}$ only in \eqref{eq:outer-opt},
we have 
\begin{equation}
z_{j}^{n,q(n,i)}\overset{\eqref{eq:outer-opt}}{=}\arg\min_{z_{j}}h_{j}^{*}(z_{j})+\underbrace{\frac{1}{2}\left\Vert -\sum_{i'\neq j}z_{i'}^{n,q(n,i)}-z_{j}+x_{0}\right\Vert ^{2}-\frac{1}{2}\|x_{0}\|^{2}}_{=(\frac{1}{r}g)^{*}(-\sum_{i'\neq j}z_{i'}^{n,q(n,i)}-z_{j})}.\label{eq:z-n-q-min}
\end{equation}
(Note that the problem in \eqref{eq:z-n-q-min} is equivalent to \eqref{eq:outer-opt}
fixed to only $z_{j}^{n,q(n,i)}$ through \eqref{eq:inner-opt-2}.)
Since 
\begin{equation}
\begin{array}{c}
h_{j}^{*}(\cdot)\overset{\scriptsize{\mbox{Assn \ref{assu:g-and-lambda}}}}{=}(\frac{1}{m+1}g)^{*}(\cdot)\overset{\scriptsize{\mbox{Assn \ref{assu:g-and-lambda}}}}{=}\frac{1}{2}\|\cdot+x_{0}\|^{2}-\frac{1}{2}\|x_{0}\|^{2},\end{array}\label{eq:h-j-star}
\end{equation}
we have $\nabla h_{j}^{*}(\cdot)=\cdot+x_{0}$, which gives $0\overset{\eqref{eq:z-n-q-min}}{=}(z_{j}^{n,q(n,i)}+x_{0})+(\sum_{i'=1}^{r+m}z_{i'}^{n,q(n,i)}-x_{0})$,
or
\begin{equation}
\begin{array}{c}
z_{j}^{n,q(n,i)}\overset{\eqref{eq:z-n-q-min}}{=}-\underset{i'=1}{\overset{r+m}{\sum}}z_{i'}^{n,q(n,i)}.\end{array}\label{eq:z-n-q}
\end{equation}
Next, recall \eqref{eq:h-j-star}. The set of variables $\{z_{i'}^{n,p(n,i)}\}_{i'\in S_{n,p(n,i),j}'\backslash\{j\}}$
is a minimizer of the problem \eqref{eq_m:solve-inner-j}, which can
be written through Proposition \ref{prop:reduced-pblm} as  
\begin{equation}
\begin{array}{c}
\underset{z_{i'}:i'\in S_{n,p(n,i),j}'\backslash\{j\}}{\min}\underset{i'\in S_{n,p(n,i),j}'\backslash\{j\}}{\sum}h_{i'}^{*}(z_{i'})+\frac{1}{2}\bigg\|\underset{i'\in S_{n,p(n,i),j}'}{\sum}z_{i'}^{n,p(n,i)-1}-\underset{i'\in S_{n,p(n,i),j}'\backslash\{j\}}{\sum}z_{i'}+x_{0}\bigg\|^{2}.\end{array}\label{eq:z-i-prime-in-S-prime-pblm}
\end{equation}
Now, 

\begin{equation}
\begin{array}{c}
\underset{i'\in S_{n,p(n,i),j}'}{\sum}z_{i'}^{n,p(n,i)-1}\overset{\eqref{eq:z-q-equal-z-p}}{=}\underset{i'\in S_{n,p(n,i),j}'}{\sum}z_{i'}^{n,q(n,i)}\overset{\eqref{eq:z-n-q}}{=}-\underset{i'\notin S_{n,p(n,i),j}'\backslash\{j\}}{\sum}z_{i'}^{n,q(n,i)},\end{array}\label{eq:manipulate-z-i-prime}
\end{equation}
Since $z_{i}^{n,p(n,i)}$ is a component of a minimizer of \eqref{eq:z-i-prime-in-S-prime-pblm},
we can use the optimality conditions there to get 
\begin{eqnarray*}
0 & \overset{\eqref{eq:z-i-prime-in-S-prime-pblm}}{\in} & \begin{array}{c}
\partial h_{i}^{*}(z_{i}^{n,p(n,i)})+\underset{i'\in S_{n,p(n,i),j}'\backslash\{j\}}{\sum}z_{i'}^{n,p(n,i)}-\underset{i'\in S_{n,p(n,i),j}'}{\sum}z_{i'}^{n,p(n,i)-1}-x_{0}\end{array}\\
 & \overset{\eqref{eq:manipulate-z-i-prime}}{=} & \begin{array}{c}
\partial h_{i}^{*}(z_{i}^{n,p(n,i)})+\underset{i'\in S_{n,p(n,i),j}'\backslash\{j\}}{\sum}z_{i'}^{n,p(n,i)}+\underset{i'\notin S_{n,p(n,i),j}'\backslash\{j\}}{\sum}z_{i'}^{n,q(i)}-x_{0}.\end{array}
\end{eqnarray*}
 The formula above gives $x_{i}^{n}\overset{\eqref{eq:choose-x-n-i}}{\in}\partial h_{i}^{*}(z_{i}^{n,p(n,i)})$.
Now 
\begin{eqnarray}
\|x_{i}^{n}-(x_{0}-v^{n,q(n,i)})\| & \overset{\eqref{eq:choose-x-n-i},\eqref{eq_m:from-10-13}}{=} & \begin{array}{c}
\bigg\|\underset{i'\in S_{n,p(n,i),j}'\backslash\{j\}}{\sum}\left(z_{i'}^{n,p(n,i)}-z_{i'}^{n,q(n,i)}\right)\bigg\|\end{array}\nonumber \\
 & \overset{\eqref{eq:z-q-equal-z-p}}{=} & \begin{array}{c}
\bigg\|\underset{i'\in S_{n,p(n,i),j}'\backslash\{j\}}{\sum}\left(z_{i'}^{n,p(n,i)}-z_{i'}^{n,p(n,i)-1}\right)\bigg\|\end{array}\nonumber \\
 & \overset{\eqref{eq:solve-inner-j-2}}{=} & \begin{array}{c}
\|z_{j}^{n,p(n,i)}-z_{j}^{n,p(n,i)-1}\|.\end{array}\label{eq:2nd-case-end}
\end{eqnarray}
We thus have 
\begin{equation}
\|x_{i}^{n}-(x_{0}-v^{n,\bar{w}})\|\leq\|x_{i}^{n}-(x_{0}-v^{n,q(n,i)})\|+\|v^{n,q(n,i)}-v^{n,\bar{w}}\|\overset{\eqref{eq:2nd-case-end},\eqref{eq:def-gamma}}{\leq}\gamma_{n}.\label{eq:conv-ineq-2}
\end{equation}
Lastly, we consider the case where $i\in\{r+1,\dots,r+m\}$ and $i\in S_{n,p(n,i),i}'$.
For this $i$, recall that $h_{i}^{*}(\cdot)=\frac{1}{2}\|\cdot+x_{0}\|^{2}-\frac{1}{2}\|x_{0}\|^{2}$.
Hence $\nabla h_{i}^{*}(\cdot)=\cdot+x_{0}$, which would mean that
$z_{i}^{n,p(n,i)}+x_{0}\in\partial h_{i}^{*}(z_{i}^{n,p(n,i)})$.
Let $x_{i}^{n}$ be $x_{0}+z_{i}^{n,p(n,i)}$. Then 
\begin{eqnarray}
\|x_{i}^{n}-(x_{0}-v^{n,\bar{w}})\| & = & \|z_{i}^{n,p(n,i)}+v^{n,\bar{w}}\|\label{eq:conv-ineq-3}\\
 & \overset{\eqref{eq:z-n-q}}{=} & \|z_{i}^{n,p(n,i)}-z_{i}^{n,q(n,i)}-v^{n,q(n,i)}+v^{n,\bar{w}}\|\nonumber \\
 & \leq & \|z_{i}^{n,p(n,i)}-z_{i}^{n,q(n,i)}\|+\|-v^{n,q(i)}+v^{n,\bar{w}}\|\nonumber \\
 & \overset{\eqref{eq:z-q-equal-z-p}}{=} & \|z_{i}^{n,p(n,i)}-z_{i}^{n,p(n,i)-1}\|+\|-v^{n,q(n,i)}+v^{n,\bar{w}}\|\overset{\eqref{eq:def-gamma}}{\leq}\gamma_{n}.\nonumber 
\end{eqnarray}

This ends of the proof of part (iv).

\textbf{Proof of (v):}  Let $x_{i}^{n}$ be as chosen in (iv). Since
$x_{i}^{n}\in\partial h_{i}^{*}(z_{i}^{n,\bar{w}})$, we have $h_{i}(x_{i}^{n})+h_{i}^{*}(z_{i}^{n,\bar{w}})=\langle x_{i}^{n},z_{i}^{n,\bar{w}}\rangle$.
From earlier results, we obtain 

\begin{eqnarray}
 &  & \begin{array}{c}
-\underset{i=1}{\overset{r+m}{\sum}}h_{i}(x_{0}-v^{*})\end{array}\label{eq:biggest-ineq}\\
 & \overset{\eqref{eq:From-8}}{\leq} & \begin{array}{c}
\frac{1}{2}\|x_{0}-(x_{0}-v^{*})\|^{2}-F(z^{n,\bar{w}})\end{array}\nonumber \\
 & \overset{\eqref{eq:dual}}{=} & \begin{array}{c}
\frac{1}{2}\|v^{*}\|^{2}+\underset{i=1}{\overset{r+m}{\sum}}h_{i}^{*}(z_{i}^{n,\bar{w}})-\langle x_{0},v^{n,\bar{w}}\rangle+\frac{1}{2}\|v^{n,\bar{w}}\|^{2}\end{array}\nonumber \\
 & = & \begin{array}{c}
\frac{1}{2}\|v^{*}\|^{2}+\underset{i=1}{\overset{r+m}{\sum}}\left(-h_{i}(x_{i}^{n})+\langle x_{i}^{n},z_{i}^{n,\bar{w}}\rangle\right)-\langle x_{0},v^{n,\bar{w}}\rangle+\frac{1}{2}\|v^{n,\bar{w}}\|^{2}\end{array}\nonumber \\
 & \overset{\eqref{eq:from-10}}{=} & \begin{array}{c}
\frac{1}{2}\|v^{*}\|^{2}-\frac{1}{2}\|v^{n,\bar{w}}\|^{2}+\underset{i=1}{\overset{r+m}{\sum}}\left(-h_{i}(x_{i}^{n})+\langle x_{i}^{n}-(x_{0}-v^{n,\bar{w}}),z_{i}^{n,\bar{w}}\rangle\right).\end{array}\nonumber 
\end{eqnarray}
In view of part (iv), we can choose $x_{i}^{n}$ to satisfy 
\begin{equation}
\langle x_{i}^{n}-(x_{0}-v^{n,\bar{w}}),z_{i}^{n,\bar{w}}\rangle\leq\|x_{i}^{n}-(x_{0}-v^{n,\bar{w}})\|\|z_{i}^{n,\bar{w}}\|\overset{\scriptsize\mbox{part (iv)}}{\leq}\gamma_{n}\|z_{i}^{n,\bar{w}}\|.\label{eq:intermediate-step}
\end{equation}
Recall the assumption that $\|z^{n,\bar{w}}\|\in O(\sqrt{n})$, and
from part (iii) that $\lim_{k\to\infty}\gamma_{n_{k}}\sqrt{n_{k}}=0$
and $\lim_{k\to\infty}v^{n,\bar{w}}=v^{*}$. We thus have 
\begin{equation}
\lim_{k\to\infty}\gamma_{n_{k}}\|z_{i}^{n_{k},\bar{w}}\|=0\mbox{ for all }i\in\{1,\dots,r+m\}.\label{eq:limit-gamma-z}
\end{equation}
We now look at \eqref{eq:biggest-ineq}. The first two terms in the
final line have limit $0$ as $k\nearrow\infty$. In view of \eqref{eq:intermediate-step}
and \eqref{eq:limit-gamma-z}, the limit of the last term in the final
line equals $\lim_{k\to\infty}-\sum_{i=1}^{r+m}h_{i}(x_{i}^{n_{k}})$
(we can limit to a subsequence so that this limit actually exists).
Part (iv) implies that $\lim_{k\to\infty}\gamma_{n_{k}}=0$, so for
all $i\in\{1,\dots,r+m\}$, we have 
\[
0\leq\lim_{k\to\infty}\|x_{i}^{n_{k}}-(x_{0}-v^{*})\|\overset{\scriptsize\mbox{part (iv)}}{\leq}\lim_{k\to\infty}\gamma_{n_{k}}=0.
\]
Thus $\lim_{k\to\infty}x_{i}^{n_{k}}=x_{0}-v^{*}$. The lower semicontinuity
of the functions $h_{i}(\cdot)$ implies that $\lim_{k\to\infty}-\sum_{i=1}^{r+m}h_{i}(x_{i}^{n_{k}})\leq-\sum_{i=1}^{r+m}h_{i}(x_{0}-v^{*})$.
Therefore this ends the proof of the result at hand. \end{proof}
\begin{rem}
(On $\|z^{n,\bar{w}}\|\in O(\sqrt{n})$) The level sets of the dual
problem may be unbounded. (See for example, \cite[page 9]{Han88}
and \cite[Section 4]{Gaffke_Mathar}.) In such a case the condition
$\|z^{n,\bar{w}}\|\in O(\sqrt{n})$ controls the rate of growth of
the dual variable $z^{n,\bar{w}}$ so that the proof of convergence
carries through. If Algorithm \ref{alg:Dist-Dyk} were run with Algorithm
\ref{alg:calc-inner-opt}, then a sufficient condition for $\|z^{n,\bar{w}}\|\in O(\sqrt{n})$
is that $|S_{n,w}|=1$ for all $n\in\mathbb{N}$ and $w\in\{1,\dots,\bar{w}\}$,
and that if $S_{n,w,j}'\neq\emptyset$, then $|S_{n,w,j}'\backslash\{j\}|=1$.
Depending on the structure of the functions $h_{i}(\cdot)$, it is
still possible to obtain $\|z^{n,\bar{w}}\|\in O(\sqrt{n})$ even
if $|S_{n,w}|>1$ or $|S_{n,w,j}'\backslash\{j\}|>1$. We refer to
\cite{Pang_Dyk_spl} for more details. Note that there are options
other than Algorithm \ref{alg:calc-inner-opt} to carry out \eqref{eq_m:solve-inner-j}.
For example, one can use the strategies mentioned in Subsection \ref{sub:general-strategies}.
A generalization of Theorem \ref{thm:convergence} for such situations
would either need $\|z^{n,\bar{w}}\|\in O(\sqrt{n})$ or a new method
of proof. 
\end{rem}

\subsection{Parallel computations satisfying Assumption \ref{assu:ABC}}

Consider a simple example where $r=2$, $m=2$, $\bar{w}=4$, and
that for all $n\geq1$, we have\begin{subequations}\label{eq-m:orig-calc}
\begin{equation}
S_{n,1}=\{3\},\quad S_{n,2}=\{1\},\quad S_{n,3}=\{2\},\quad S_{n,4}=\{4\}.\label{eq:orig-calc-1}
\end{equation}
If $S'_{n,w,j}=\emptyset$ for all $j\in\{3,4\}$ and $w\in\{1,2,3,4\}$,
then Assumption \ref{assu:ABC} is satisfied, and the convergence
theory of the earlier part of this section holds. If we have 
\begin{equation}
S'_{n,3,3}=\{1,3\}\mbox{ and }S'_{n,4,3}=\{2,3\},\label{eq:orig-calc-2}
\end{equation}
\end{subequations}with all other $S'_{n,j,w}=\emptyset$ instead,
then we can check that Assumption \ref{assu:ABC}(B) is not satisfied.
(Specifically, note that $p(n,3)=4$. But $3\notin S_{n,3}$ and $3\in S_{n,3,3}'$.)
But since calculations involving $S_{n,3,3}'$ and $S_{n,4,3}'$ do
not affect calculations involving $S_{n,3}$ and $S_{n,4}$, we can
move them to the next iteration counter $n+1$; Specifically, we have
$\bar{w}=6$, and for all $n\geq1$, 
\begin{eqnarray}
 &  & \tilde{S}_{n,1}=\tilde{S}_{n,2}=\emptyset,\quad,\tilde{S}_{n,3}=\{3\},\quad\tilde{S}_{n,4}=\{1\},\quad\tilde{S}_{n,5}=\{2\},\quad\tilde{S}_{n,6}=\{4\}\nonumber \\
 & \mbox{and} & \tilde{S}_{n+1,1,3}'=\{1,3\},\quad\tilde{S}_{n+1,2,3}=\{2,3\}.\label{eq:transferred-calc}
\end{eqnarray}
The calculations for \eqref{eq-m:orig-calc} and \eqref{eq:transferred-calc}
are the same, but transferring the calculations from $S'_{n,3,3}$
and $S'_{n,4,3}$ to $\tilde{S}'_{n+1,1,3}$ and $\tilde{S}'_{n+1,2,3}$
allows the convergence theory in the earlier part of this section
to go through.

\bibliographystyle{amsalpha}
\bibliography{../refs}

\providecommand{\bysame}{\leavevmode\hbox to3em{\hrulefill}\thinspace}
\providecommand{\MR}{\relax\ifhmode\unskip\space\fi MR }
\providecommand{\MRhref}[2]{%
  \href{http://www.ams.org/mathscinet-getitem?mr=#1}{#2}
}
\providecommand{\href}[2]{#2}
\begin{thebibliography}{Deu01b}

\bibitem[BC08]{Bauschke-Combettes-Dykstra-split-2008}
H.H. Bauschke and P.L. Combettes, \emph{A {D}ykstra-like algorithm for two
  monotone operators}, Pacific J. Optim. \textbf{4} (2008), 383--391.

\bibitem[BC11]{BauschkeCombettes11}
\bysame, \emph{Convex analysis and monotone operator theory in {H}ilbert
  spaces}, Springer, 2011.

\bibitem[BD85]{BD86}
J.P. Boyle and R.L. Dykstra, \emph{A method for finding projections onto the
  intersection of convex sets in {H}ilbert spaces}, Advances in Order
  Restricted Statistical Inference, Lecture notes in Statistics, Springer, New
  York, 1985, pp.~28--47.

\bibitem[BT09]{BeckTeboulle2009}
A.~Beck and M.~Teboulle, \emph{A fast iterative shrinkage-thresholding
  algorithm for linear inverse problems}, SIAM J. Imaging Sciences \textbf{2}
  (2009), no.~1, 183--202.

\bibitem[Cal16]{Calafiore_2016_parallel}
G.C. Calafiore, \emph{Parallel block coordinate minimization with application
  to group regularized regression}, Optim. Eng. \textbf{17} (2016), 941--964.

\bibitem[CP11]{Combettes_Pesquet}
P.L. Combettes and J.-C. Pesquet, \emph{Proximal splitting methods in signal
  processing}, Fixed-Point Algorithms for Inverse Problems in Science and
  Engineering (H.H. Bauschke, R.S. Burachik, P.L. Combettes, V.~Elser, D.R.
  Luke, and H.~Wolkowicz, eds.), 2011, pp.~185--212.

\bibitem[Deu01a]{Deutsch01_survey}
F.~Deutsch, \emph{Accelerating the convergence of the method of alternating
  projections via a line search: A brief survey}, Inherently Parallel
  Algorithms in Feasibility and Optimization and their Applications
  (D.~Butnariu, Y.~Censor, and S.~Reich, eds.), Elsevier, 2001, pp.~203--217.

\bibitem[Deu01b]{Deustch01}
\bysame, \emph{Best approximation in inner product spaces}, Springer, 2001, CMS
  Books in Mathematics.

\bibitem[Dyk83]{Dykstra83}
R.L. Dykstra, \emph{An algorithm for restricted least-squares regression}, J.
  Amer. Statist. Assoc. \textbf{78} (1983), 837--842.

\bibitem[ER11]{EsRa11}
R.~Escalante and M.~Raydan, \emph{Alternating projection methods}, SIAM, 2011.

\bibitem[GM89]{Gaffke_Mathar}
N.~Gaffke and R.~Mathar, \emph{A cyclic projection algorithm via duality},
  Metrika \textbf{36} (1989), 29--54.

\bibitem[Han88]{Han88}
S.P. Han, \emph{A successive projection method}, Math. Programming \textbf{40}
  (1988), 1--14.

\bibitem[Han89]{Han89}
\bysame, \emph{A decomposition method and its application to convex
  programming}, Mathematics of Operations Research \textbf{14} (1989),
  237--248.

\bibitem[IP91]{Iusem_DePierro_Han_1991}
A.N. Iusem and A.R.~De Pierro, \emph{On the convergence of {H}an's method of
  convex programming with quadratic objective}, Math. Programming \textbf{52}
  (1991), 265--284.

\bibitem[Nes83]{Nesterov_1983}
Y.~Nesterov, \emph{A method for solving a convex programming problem with rate
  of convergence {$O(\frac{1}{k^2})$}}, Soviet Math. Doklady \textbf{269}
  (1983), no.~3, 543--547, (in Russian).

\bibitem[Pan17]{Pang_Dyk_spl}
C.H.J. Pang, \emph{Dykstra splitting and an approximate proximal point
  algorithm for minimizing the sum of convex functions}.

\bibitem[Pie84]{Pierra84}
G.~Pierra, \emph{Decomposition through formalization in a product space}, Math.
  Programming \textbf{28} (1984), 96--115.

\bibitem[RT16]{Richtarik_Takac_parallel}
P.~Richt\'{a}rik and M.~Tak\'{a}\u{c}, \emph{Parallel coordinate descent
  methods for big data optimization}, Math. Program. \textbf{156} (2016),
  no.~1, 433--484.

\bibitem[Tse93]{Tseng-93-Dual-ascent}
P.~Tseng, \emph{Dual coordinate ascent methods for non-strictly convex
  minimization}, Mathematical Programming \textbf{59} (1993), 231--247.

\bibitem[Tse08]{Tseng_APG_2008}
\bysame, \emph{On accelerated proximal gradient methods for convex-concave
  optimization}, manuscript.

\end{thebibliography}

\end{document}